\documentclass[sn-mathphys-num]{sn-jnl}


\usepackage{graphicx}%
\usepackage{multirow}%
\usepackage{amsmath,amssymb,amsfonts}%
\usepackage{amsthm}%
\usepackage{mathrsfs}%
\usepackage[title]{appendix}%
\usepackage{xcolor}%
\usepackage{textcomp}%
\usepackage{manyfoot}%
\usepackage{booktabs}%
\usepackage{algorithm}%
\usepackage{algorithmicx}%
\usepackage{algpseudocode}%
\usepackage{listings}%
\usepackage{pifont}
\newcommand{\red}[1]{\textcolor{black}{#1}}


\theoremstyle{thmstyleone}%
\newtheorem{theorem}{Theorem}
\newtheorem{proposition}[theorem]{Proposition}%

\theoremstyle{thmstyletwo}%
\newtheorem{example}{Example}%
\newtheorem{remark}{Remark}%

\theoremstyle{thmstylethree}%
\newtheorem{definition}{Definition}%

\newtheorem{corollary}[theorem]{Corollary}

\raggedbottom

\begin{document}

\title[]{A polynomial bosonic form of statistical configuration sums and
the odd/even minimal excludant in integer partitions}


\author{\fnm{Taichiro} \sur{Takagi}}

\affil{\orgdiv{Department of Applied Physics}, \orgname{National Defense Academy}, \orgaddress{\state{Kanagawa 239-8686}, \country{Japan}}}

\abstract{Inspired by the study of the minimal excludant in integer partitions
by G.E. Andrews and D. Newman, we introduce a pair of new partition statistics,
sqrank and rerank.
They are related to 
a polynomial bosonic form of statistical configuration sums
for an integrable cellular automaton.
For all nonnegative integers $n$, we prove that
the partitions of $n$ on which sqrank or rerank takes on a particular value, say $r$, are equinumerous with the partitions of $n$ on which the
odd/even minimal exclutant takes on the corresponding value, $2r+1$ or $2r+2$. 
}

\keywords{Gaussian polynomials, Ferrers diagrams, rim hooks, bit sequences, affine crystals.}


\pacs[MSC Classification]{05A17, 05A30, 11P83, 05E10, 82B23.}

\maketitle

\section{Introduction}\label{sec1}
A {\em partition} of a positive integer $n$ is a finite weakly decreasing sequence of
positive integers $\lambda = (\lambda_1, \dots, \lambda_r)$ such that $\sum_{i=1}^r \lambda_i =n$.
It is sometimes written as $\lambda \vdash n$, and each $\lambda_i$ is 
called a {\em part} of $\lambda$.
The \red{set of} integer partitions \red{is} denoted by $\mathscr{P}$, in which
the empty sequence for the only partition of zero (denoted by $\emptyset$) is also 
included as an element.

Recently, Andrews and Newman initiated \red{the study of} the {\em minimal excludant} in
integer partitions and found many remarkable relations between 
different partition statistics \cite{AN2020}.
They introduced the {minimal excludant} function 
${\rm mex}_{A,a} : \mathscr{P} \rightarrow \mathbb{Z}_{>0}$ 
{for integers $0 \leq a < A$}
by letting
${\rm mex}_{A,a}(\lambda)$ be the smallest positive
integer congruent to $a$ modulo $A$ that is not a part of $\lambda$.
They also defined
$p_{A,a}(n)$ to be the number of the partitions of $n$ {for} which
${\rm mex}_{A,a}$ takes on values congruent to $a$ modulo $2A$.
This in particular implies that $p_{2,2}(n)$ is the number of the partitions of $n$
whose `even minimal excludant' is not a multiple of four.
They asked, at least in the author's interpretation,
a question about the existence of
a different partition statistic that can 
reproduce the same integer sequence $\{ p_{2,2}(n) \}_{n \geq 0}$ \cite[p.~9, Questions II]{AN2020}.
This question naturally arises because
they {already} found theorems
on the existence of
such partition statistics that are different from ${\rm mex}_{1,1}$ and ${\rm mex}_{3,3}$ but
can reproduce the same integer sequences $\{ p_{1,1}(n) \}_{n \geq 0}$ and 
$\{ p_{3,3}(n) \}_{n \geq 0}$ \cite[Theorems 2 and 3]{AN2020}.
More specifically, they found that
the relevant partition statistics are Dyson's crank and rank \cite{Dyson1944, AG1988}.

Motivated by {the above} question,
in this paper
we introduce a pair of new partition \red{statistics, which} 
we call {\em sqrank} and {\em rerank}.
For all nonnegative integers $n$, we prove that
the partitions of $n$ on which sqrank or rerank takes on a particular value, say $r$, are equinumerous with the partitions of $n$ on which the
odd/even minimal exclutant takes on the corresponding value, $2r+1$ or $2r+2$. 
This is the main result of this paper (Theorem \ref{th:main}).
As a corollary of this theorem, the above question by Andrews and Newman is also answered.

To establish the main result, we use the fact that
our new partition statistics are associated with
a polynomial bosonic form of statistical configuration sums
for an integrable cellular automaton \cite{KOTY, Takagi1}.
Let us briefly explain the relation between this bosonic form and
a formula by Andrews and Newman \cite{AN2020}, and 
\red{comment on some relevant historical links to} mathematical physics.
We use standard notation
$(z;q)_n = \prod_{k=1}^n (1-z q^{k-1})$ for $n\geq1$, and 
$(z;q)_0=1$.
The Gaussian polynomial ${n \brack m}$ is defined by
\begin{equation}\label{eq:gaussianpoly}
{n \brack m} =
\begin{cases}
(q;q)_n (q;q)_m^{-1}(q;q)_{n-m}^{-1} & \mbox{if } 0 \leq m \leq n, \\
0 & \mbox{otherwise}.
\end{cases}
\end{equation}
As a suggestion to solve the above open question, Andrews and Newman presented
the following expression for the generating function of $p_{2,2}(n)$
\begin{equation*}
\sum_{n \geq 0} p_{2,2}(n) q^n 
=  \sum_{n \geq 0} \frac{q^{n^2+n} B_n(q)}{(q;q)_{2n+1}},
\end{equation*}
where $B_n(q)$ is a polynomial in $q$ that is 
written as an alternating sum of the Gaussian polynomials 
(See \eqref{eq:2024july2_1} in the main text).
Three conjectures about this polynomial were given, {two of which will be presented here} \cite[p.~9, Questions II (1) and II (3)]{AN2020}:
\begin{itemize}
\item $B_n(q)$ has nonnegative coefficients.
\item $B_n(q)$ enumerates some subset of the partitions into at most $n$ parts each $\leq n+1$. 
\end{itemize}
S.~Chern \cite{Chern} gave a proof of the first assertion.
As was mentioned therein, to verify this assertion it suffices to prove that the polynomial
\begin{equation*} 
X^+_{L,s}(q)  = { L \brack s }- { L \brack s-1},
\end{equation*}
has nonnegative coefficients for $1 \leq s \leq L/2$.
However, it was known that this polynomial takes the following 
statistical configuration {sum form}
\begin{equation*} 
X^+_{L,s}(q) =
\sum_{\eta \in \mathcal{H}^{+} (L,s)} q^{E(\eta)},
\end{equation*}
where $\mathcal{H}^{+} (L,s)$ is a subset of the bit sequences
of $s$ ones and $L-s$ zeros, and $E(\eta)$ is an {\em energy} attributed to
bit sequences $\eta$ (See \eqref{eq:2024july3_9} in the main text).  
From this viewpoint, the non-negativity of the coefficients of the polynomial is obvious\footnote{{The non-negativity of the coefficients of the
above polynomial can also be confirmed 
by proving that it is equivalent to some other polynomial 
that has non-negative coefficients. 
As pointed out by Warnaar \cite{Warnaar2001, Warnaar2004} for instance, it is a special case of the Kostka polynomial. We present a brief review on this subject in Appendix \ref{secA}.}}. 
The above expression of this polynomial as a subtraction of two Gaussian polynomials,
or such expressions for more general polynomials or series with nonnegative coefficients
by formulae including subtractions, are sometimes called bosonic forms.
This terminology {and the corresponding notion of one-dimensional configuration sums
were frequently} 
used in the study of the formulae for
conformal field theory characters and branching {functions
\cite{Schilling1996, SW2002, Warnaar2001}}, and such formulae have their origin in the study of
exactly solvable lattice models in $2D$ statistical mechanics {by 
Andrews, Baxter and Forrester \cite{ABF}.
In this paper we present a derivation of the above configuration sum 
based on our study \cite{Takagi1, Takagi2025}
for the sake of consistency to the subsequent arguments,
but, we emphasize that 
an essentially equivalent formula of the above configuration sum itself was already included in 
their own result \cite[Lemma 2.6.1]{ABF}.}

This relation enables us to verify the second assertion of the above conjecture for $B_n(q)$,
by introducing a partition theoretic function
to specify the unidentified subset of the set of restricted partitions.
This function, denoted by $\mathcal{E}_1$, has its origin in algebraic combinatorics 
or more precisely in
the theory of crystals \cite{KMN},
and plays an essential role in the proof of the main theorem.
To obtain a useful form for this function, we use the notion of the path encodings
of the box-ball system
in a recent study on the integrable cellular automaton \cite{CKST2023}.

The remainder of this paper is organized as follows.
In Sect.~\ref{sec2}, after reviewing
necessary notions about integer partitions and Ferrers diagrams,
we present the definition of our new partition statistics through a combinatorial procedure
and state the main theorem.
Everything what follows is for the proof of this theorem.
In Sect.~\ref{sec3}, we derive formulae for the generating functions of
a pair of mex-related functions and find that the polynomial bosonic form appears there.
In Sect.~\ref{sec4}, we prove that the bosonic form
can be expressed as a statistical configuration sum over the set of bit sequences
with a restriction.
In Sect.~\ref{sec5} and Sect.~\ref{sec6}, 
we define an energy preserving bijection between
restricted partitions and bit sequences, and also define a similar bijection between
partitions and pairs of partitions.
In Sect.~\ref{sec7} we give a proof of the main theorem, and finally
in Sect.~\ref{sec8} we present concluding remarks.
{Some supplementary materials are provided in Appendices \ref{secA}, \ref{secB}, \ref{secC}.}

\section{Preliminaries and the main result}\label{sec2}
Throughout this paper,
we identify partitions with the corresponding Ferrers diagrams, in which 
dots are replaced by unit squares called {\em cells}.
The cell in row $i$ and column $j$ has coordinate $(i,j)$, as in a matrix.
We sometimes use the following notation for a non empty partition $\lambda$ \cite{Macdonald}.
Suppose that the main diagonal of the diagram of $\lambda$ consists of $d$ cells $(i,i) \, (1 \leq i \leq d)$.
Such $d = d(\lambda)$ is called the {\em side of the Durfee square} of $\lambda$ \cite{A}.
Let $x_i$ be the number of cells in the $i$th row of $\lambda$ to the right of $(i,i)$,
and let $y_i$ be the number of cells in the $i$th column of $\lambda$ 
below $(i,i)$, for $1 \leq i \leq d$.
Then we define the {\em Frobenius representation} of $\lambda$ to be $F(\lambda) = (x_1,\dots,x_d \mid y_1,\dots,y_d)$, 
\red{see Figure} \ref{fig5} for an example.
\begin{figure}[h]
\centering
\includegraphics[height=2cm]{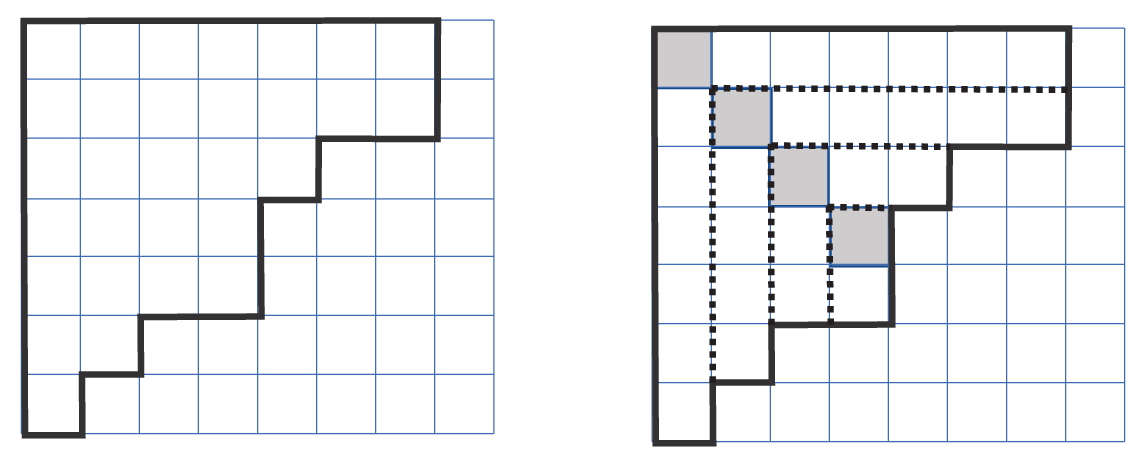}
\caption{(left) The Ferrers diagram for partition $\lambda = (7,7,5,4,4,2,1).$; 
(right) Its decomposition into
the hooks for recognizing its Frobenius representation 
$F(\lambda) =(6,5,2,0 \mid 6,4,2,1)$.}
\label{fig5}
\end{figure}

A {\em rim hook} (also called a border strip) is a skew diagram
if it is edgewise connected and contains no $2 \times 2$ block of cells \cite{Sagan2000}.
The number of all cells contained in a rim hook is called its {\em length}.
By generalizing the corresponding notions for a {\em hook} \cite{Sagan2000}, we define
the {\em arm length} of a rim hook to be one less than the number of columns
it occupies, and the {\em leg length} of a rim hook to be one less than the number of rows
it occupies.
The {\em rim} of a Ferrers diagram $\lambda$ is defined to be the rim hook that
consists of all the cells along the south-east border of $\lambda$.
More precisely, if $F(\lambda) = (x_1,\dots,x_d \mid y_1,\dots,y_d)$, then
the rim of $\lambda$ is defined by the skew shape $\lambda / \nu$ where
$\nu$ is the partition uniquely determined by 
$F(\nu) = (x_2,\dots,x_d \mid y_2,\dots,y_d)$.

Now we introduce the pair of partition statistics or maps
from $\mathscr{P}$ to $\mathbb{Z}_{\geq 0}$,
tentatively denoted by $f^{(a)} \, (a=0,1)$,
by the following procedure.
Given $\lambda \in \mathscr{P}$;
\begin{enumerate}
\item 
Let $D_0(\lambda)$ be the {Durfee square of} $\lambda$, 
and let $D_1(\lambda)$ be the largest rectangle inside $\lambda$ 
{such that the horizontal side length exceeds the vertical side length by one.}
Let $n_a = n_a(\lambda)$ denote the vertical side of $D_a(\lambda)$, and let
$A_a(\lambda)$ denote the sub-diagram of $\lambda$ that consists of 
the cells inside the first $n_a$ rows but not in $D_a(\lambda)$ 
\footnote{If $\lambda = \emptyset$,
we set $D_0(\lambda) = A_0(\lambda)=\emptyset$ and $n_0=0$. 
If $\lambda$ has only ones as parts or $\lambda = \emptyset$,
we set $D_1(\lambda) = A_1(\lambda)=\emptyset$ and $n_1=0$.}. 
\item From diagram $A_a(\lambda)$, 
strip away the longest of the rightmost rim hooks with arm length $n_a + a$
lying along the rim of the diagram.
Repeat the same procedure until the remaining diagram has less than $n_a+a+1$ columns, and
let $R_a(\lambda)$ denote the residual diagram.
\item Let $F(R_a(\lambda))=(x_1,\dots,x_d \mid y_1,\dots,y_d)$.
Set $y_0 = n_a, x_{d+1} = -1$ and define \footnote{If $R_a(\lambda) = \emptyset$, we set $d=0$ and hence $f^{(a)}(\lambda) =n_a$.}
\begin{equation*}
f^{(a)}(\lambda) := \max_{0 \leq i \leq d} (y_i -x_{i+1}) -1.
\end{equation*}
\end{enumerate}
\red{We now introduce our two main new partition statistics.}
\begin{definition}
For \red{$\lambda$ a partition, define}
\begin{enumerate}
\item ${\rm sqrank} (\lambda) = f^{(0)}(\lambda)$,
\item ${\rm rerank} (\lambda) = f^{(1)}(\lambda)$.
\end{enumerate}
\end{definition}

In what follows, we call {$D_1(\lambda)$ 
the {\em Durfee rectangle}} of the partition $\lambda$.

\begin{example}
Let $\lambda = (19,16,9,2,1)$.
Then we have $n_0 = 3, D_0(\lambda) = (3,3,3)$ and
$A_0(\lambda)=(16,13,6)$.
After stripping away the rim hooks with arm length 3 from $A_0(\lambda)$, one has $R_0(\lambda) = (1,1)$.
Then we find $F(R_0(\lambda))=(0 \mid 1)$ and hence
${\rm sqrank}(\lambda)  =\max (y_0-x_1, y_1-x_2) -1
 =\max (3-0, 1-(-1)) -1 = 2$.
See Fig \ref{fig1}.
\begin{figure}[h]
\centering
\includegraphics[height=2cm]{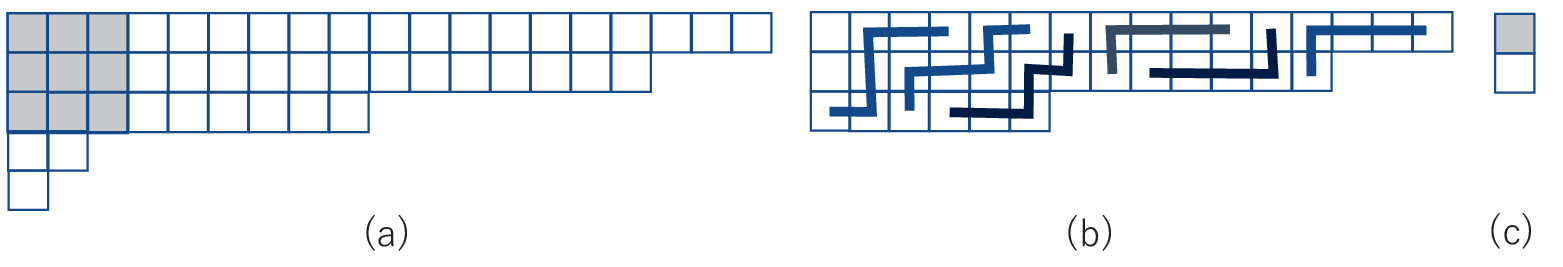}
\caption{(a) The diagram of $\lambda$ and its Durfee square $D_0(\lambda)$ (shaded). 
(b) The rim hooks with arm length 3 that will be stripped away from the sub-diagram $A_0(\lambda)$. 
(c) The residual diagram $R_0(\lambda)$ and its diagonal node (shaded).}
\label{fig1}
\end{figure}

\end{example}

\begin{example}
Let $\lambda = (19,16,9,2,1)$.
Then we have $n_1 = 3, D_1(\lambda) = (4,4,4)$ and
$A_1(\lambda)=(15,12,5)$.
After stripping away the rim hooks with arm length 4 from $A_1(\lambda)$, one has $R_1(\lambda) = (3,3)$.
Then we find $F(R_1(\lambda))=(2,1 \mid 1,0)$ and hence
${\rm rerank}(\lambda) =\max (y_0-x_1, y_1-x_2, y_2-x_3) -1= \max (3-2, 1-1, 0-(-1)) -1 = 0$.
See Fig \ref{fig2}.
\begin{figure}[h]
\centering
\includegraphics[height=1.5cm]{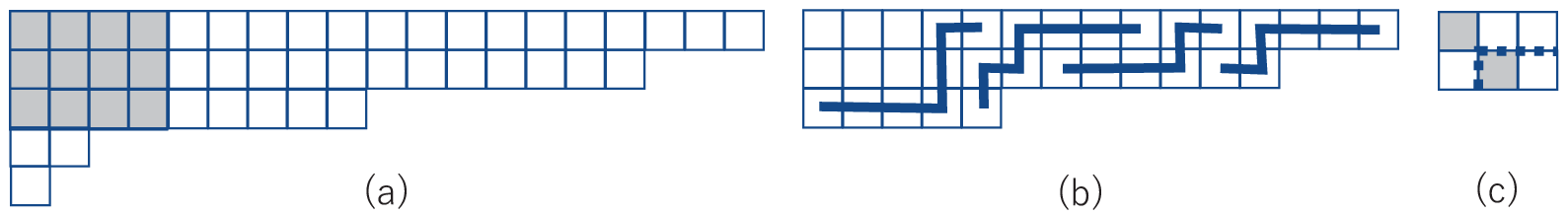}
\caption{(a) The diagram of $\lambda$ and its Durfee rectangle $D_1(\lambda)$ (shaded). 
(b) The rim hooks with arm length 4 that will be stripped away from the sub-diagram $A_1(\lambda)$. 
(c) The residual diagram $R_1(\lambda)$ and its diagonal nodes (shaded), and its decomposition into
the hooks.}
\label{fig2}
\end{figure}

\end{example}

As {introduced 
in Section} \ref{sec1}, ${\rm mex}_{A,a}(\lambda)$ is the smallest positive
integer congruent to $a$ modulo $A$ that is not a part of $\lambda$.
In particular, ${\rm mex}_{2,1}(\lambda)$ (resp.~${\rm mex}_{2,2}(\lambda)$) is the smallest 
positive odd (resp.~even) number that is 
not a part of $\lambda$.
For all $n ,r \in \mathbb{Z}_{\geq 0}$, we define
\begin{align}
p^{(r)}_{2,1}(n) &= \# \{ \lambda \vdash n \mid {\rm mex}_{2,1}(\lambda) = 2r+1 \}, \\
p^{(r)}_{2,2}(n) &= \# \{ \lambda \vdash n \mid {\rm mex}_{2,2}(\lambda) = 2r+2 \},
\end{align}
where $\#$ denotes cardinality.
Here we let $p^{(r)}_{2,1}(0) = p^{(r)}_{2,2}(0) = \delta_{r,0}$ since
for the only partition $\lambda = \emptyset$ of $n=0$ we have ${\rm mex}_{2,1}(\lambda) = 1$
and ${\rm mex}_{2,2}(\lambda) = 2$.
The following is the main result of this paper.
\begin{theorem}\label{th:main}
It holds that
\begin{enumerate}
\item $p^{(r)}_{2,1}(n)$ equals the number of partitions {$\lambda \vdash n$} 
with ${\rm sqrank}(\lambda) =r$.
\item $p^{(r)}_{2,2}(n)$ equals the number of partitions {$\lambda \vdash n$} 
with ${\rm rerank}(\lambda) =r$.
\end{enumerate}
\end{theorem}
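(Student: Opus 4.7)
The plan is to compute the generating function of each side of the two identities and show they agree, using the polynomial bosonic form as the common intermediate. Since the statistics on the right are defined through a Durfee-type decomposition followed by rim-hook stripping, I would stratify partitions by the Durfee parameter $n_a(\lambda)$ and control the arm part via the bosonic form and its configuration-sum interpretation from Sections 3 and 4.

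The easy side first. A partition $\lambda$ with ${\rm mex}_{2,1}(\lambda) = 2r+1$ contains each of $1, 3, \dots, 2r-1$ at least once and omits $2r+1$, with all other parts (even, or odd $\geq 2r+3$) free. Multiplying the elementary generating factors and telescoping using $(q;q^2)_\infty = (q;q^2)_r (1-q^{2r+1}) (q^{2r+3};q^2)_\infty$ yields
\begin{equation*}
\sum_{n \geq 0} p^{(r)}_{2,1}(n) q^n = \frac{q^{r^2}(1-q^{2r+1})}{(q;q)_\infty},
\end{equation*}
and an analogous argument gives $\sum_n p^{(r)}_{2,2}(n) q^n = q^{r(r+1)}(1-q^{2r+2})/(q;q)_\infty$. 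These closed forms are the targets that the right-hand counts must reproduce.

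For the sqrank/rerank side, I would decompose a general $\lambda$ around $D_a(\lambda)$ into the rectangle itself (contributing $q^{n_a(n_a+a)}$), a leg below (a partition into parts $\leq n_a + a$), and the arm $A_a(\lambda)$ (a partition with at most $n_a$ parts). The rim-hook stripping procedure reduces $A_a(\lambda)$ to a residual $R_a(\lambda)$ with fewer than $n_a + a + 1$ columns, and it is reversible: given $R_a$, rim hooks of arm length $n_a+a$ can be reinserted freely along the rim. Packaging this freedom as a factor and summing over admissible residuals, the generating function for partitions with $f^{(a)}(\lambda) = r$ gets expressed through the polynomial bosonic form $X^+_{L,s}(q) = {L \brack s} - {L \brack s-1}$. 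Rewriting the latter as the statistical configuration sum $\sum_{\eta \in \mathcal{H}^+(L,s)} q^{E(\eta)}$ and building an energy-preserving bijection between residual partitions and bit sequences under which $\max_{0 \leq i \leq d}(y_i - x_{i+1}) - 1$ (with $y_0 = n_a$, $x_{d+1} = -1$) equals the $\mathcal{E}_1$-value on the corresponding path, one recovers, after summing over $n_a$, precisely the target formulae displayed above.

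The hard step is the bijective one: matching the combinatorial ``max of Frobenius gaps'' that defines $f^{(a)}$ with the $\mathcal{E}_1$/energy statistic on paths. This requires tracking how rim-hook stripping interacts with the path-encoding of the box-ball system, checking that the boundary conventions $y_0 = n_a$ and $x_{d+1} = -1$ translate exactly into the boundary structure of $\mathcal{H}^+(L,s)$, and handling the degenerate cases (empty residual, $\lambda = \emptyset$) so that the rule $f^{(a)}(\lambda) = n_a$ there agrees with the configuration-sum value on the trivial path. Everything else is elementary bookkeeping once this correspondence is established.
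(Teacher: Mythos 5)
Your proposal follows essentially the same route as the paper: the mex side is evaluated to the closed forms $q^{r^2}(1-q^{2r+1})/(q;q)_\infty$ and $q^{r(r+1)}(1-q^{2r+2})/(q;q)_\infty$, while the sqrank/rerank side is stratified by the Durfee parameter, the arm is handled by the reversible rim-hook stripping (the paper's $\Psi^{(a)}$), and the residual is matched to the configuration sum $Z^{(r)}_{2n+a,n}(q)$ via the energy-preserving Frobenius/bit-sequence bijection identifying the gap statistic with $\varepsilon_1$. The only step you leave implicit is the evaluation of $\sum_{n}q^{n^2+an}Z^{(r)}_{2n+a,n}(q)/(q;q)_{2n+a}$ as those closed products, which the paper carries out by telescoping and the Cauchy identity $\sum_{n\geq 0}q^{n^2-n}z^n/\bigl((q;q)_n(z;q)_n\bigr)=1/(z;q)_\infty$.
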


\begin{example}
Let $n=8$.
Then, item 1 (resp.~item 2) of Theorem \ref{th:main} is shown in Table \ref{tab1} (resp.~Table \ref{tab2} ).  
In this example, we use the multiplicity notation $\mu_1^{m_1}\mu_2^{m_2}\dots\mu_\ell^{m_\ell}$
to represent a partition
in which $m_i$ denotes the multiplicity of the part
$\mu_i$ (which is omitted when $m_i=1$) and $\mu_1>\mu_2>\dots>\mu_\ell$.
\end{example}

Let $p_{2,2}(n) = \sum_{m \geq 0} p^{(2m)}_{2,2}(n)$, which equals the number of partitions
$\lambda$ of $n$, where
\begin{equation}
{\rm mex}_{2,2}(\lambda) \equiv 2 \pmod{4}.
\end{equation}
As a direct result of the above theorem, we have:
\begin{corollary}\label{coro:2}
$p_{2,2}(n)$ equals the number of partitions of $n$ with even rerank.
\end{corollary}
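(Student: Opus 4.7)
The plan is to derive Corollary \ref{coro:2} directly from part (2) of Theorem \ref{th:main}; essentially no new combinatorial work is required, since all the nontrivial content is already packaged in the main theorem.

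First I would unpack the definition of $p_{2,2}(n)$. By the definition of $p^{(r)}_{2,2}(n)$, the sum
\begin{equation*}
p_{2,2}(n) \;=\; \sum_{m \geq 0} p^{(2m)}_{2,2}(n)
\end{equation*}
counts those partitions $\lambda \vdash n$ whose even minimal excludant equals $2(2m)+2 = 4m+2$ for some nonnegative integer $m$, that is, the partitions with ${\rm mex}_{2,2}(\lambda) \equiv 2 \pmod 4$. This matches the characterization of $p_{2,2}(n)$ stated just before the corollary.

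Next I would apply Theorem \ref{th:main}(2) term by term. For each $m \geq 0$, the theorem gives
\begin{equation*}
p^{(2m)}_{2,2}(n) \;=\; \#\{\lambda \vdash n \mid {\rm rerank}(\lambda) = 2m\}.
\end{equation*}
Since ${\rm rerank}$ is, by construction, a map $\mathscr{P} \to \mathbb{Z}_{\geq 0}$, every partition of $n$ with even rerank contributes to exactly one summand, indexed by $m = {\rm rerank}(\lambda)/2$. Summing over $m$ therefore yields precisely the number of partitions of $n$ with even rerank, which is the claim.

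There is no genuine obstacle here; the only care needed is the index bookkeeping that the even values of rerank form the set $\{0,2,4,\dots\}$, indexed bijectively by $m \in \mathbb{Z}_{\geq 0}$ via $r = 2m$, matching the summation range in the definition of $p_{2,2}(n)$. All the substantive content is concentrated in Theorem \ref{th:main}(2).
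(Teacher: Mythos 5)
Your proof is correct and matches the paper's own reasoning: the paper likewise obtains the corollary as a direct consequence of Theorem \ref{th:main}(2) by summing $p^{(2m)}_{2,2}(n)$ over $m \geq 0$ and identifying the result with the count of partitions of even rerank.
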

This {provides an answer to the problem posed} by
Andrews {and} Newman \cite[p.~9, Question II]{AN2020}.

\begin{table}[h]
\caption{Comparison between sqrank and ${\rm mex}_{2,1}$ for the partitions of 8}\label{tab1}
\begin{tabular*}{\textwidth}{@{\extracolsep\fill}lcccccc}
\toprule%
& \multicolumn{3}{@{}c@{}}{sqrank} & \multicolumn{3}{@{}c@{}}{${\rm mex}_{2,1}$} \\\cmidrule{2-4}\cmidrule{5-7}%
Partition & 0 & 1 & 2 & 1 & 3 & 5 \\
\midrule
$1^8$       &                & \ding{"33} &                &                & \ding{"33} &               \\
$21^6$     & \ding{"33} &                &                &                & \ding{"33} &                \\
$2^21^4$  &                &                & \ding{"33} &                & \ding{"33} &                \\
$2^31^2$  &                &                & \ding{"33} &                & \ding{"33} &                \\
$2^4$       &                &                & \ding{"33} & \ding{"33} &                &               \\
$31^5$     &                & \ding{"33} &                &                &                & \ding{"33}\\
$321^3$   &                & \ding{"33} &                &                &                & \ding{"33}\\
$32^21$   &                & \ding{"33} &                &                &                & \ding{"33}\\
$3^21^2$  &                & \ding{"33} &                &                &                & \ding{"33}\\
$3^22$     &                & \ding{"33} &                & \ding{"33}   &                &                \\
$41^4$     & \ding{"33} &                &                &                & \ding{"33} &                \\
$421^2$    & \ding{"33} &                &                &                & \ding{"33} &                \\
$42^2$     & \ding{"33} &                &                &   \ding{"33} &                &                \\
$431$     &                & \ding{"33} &                &                    &                & \ding{"33}\\
$4^2$     & \ding{"33} &                &                &   \ding{"33} &                &                \\
$51^3$   &                & \ding{"33} &                &                & \ding{"33} &               \\
$521$    &                &                & \ding{"33} &                & \ding{"33} &                \\
$53$      &                &                & \ding{"33} & \ding{"33} &                &               \\
$61^2$   & \ding{"33} &                &                &                 &  \ding{"33} &               \\
$62$     &                & \ding{"33} &                & \ding{"33}   &                &                \\
$71$      &                & \ding{"33} &                &                & \ding{"33} &               \\
$8$     & \ding{"33} &                &                   &  \ding{"33}  &                 &                \\
Total & 7 & 10 & 5 & 7 & 10 & 5 \\
\botrule
\end{tabular*}
\end{table}

\begin{table}[h]
\caption{Comparison between rerank and ${\rm mex}_{2,2}$ for the partitions of 8}\label{tab2}
\begin{tabular*}{\textwidth}{@{\extracolsep\fill}lcccccc}
\toprule%
& \multicolumn{3}{@{}c@{}}{rerank} & \multicolumn{3}{@{}c@{}}{${\rm mex}_{2,2}$} \\\cmidrule{2-4}\cmidrule{5-7}%
Partition & 0 & 1 & 2 & 2 & 4 & 6 \\
\midrule
$1^8$       & \ding{"33} &                &                   &  \ding{"33}  &                 &                \\
$21^6$     &                & \ding{"33} &                &                 & \ding{"33} &               \\
$2^21^4$  &                & \ding{"33} &                &                 & \ding{"33} &               \\
$2^31^2$  &                & \ding{"33} &                &                 & \ding{"33} &               \\
$2^4$       &                & \ding{"33} &                &                 & \ding{"33} &               \\
$31^5$     & \ding{"33} &                &                   &  \ding{"33}  &                 &                \\
$321^3$   & \ding{"33} &                &                &                & \ding{"33} &                \\
$32^21$   & \ding{"33} &                &                &                & \ding{"33} &                \\
$3^21^2$  &                &                & \ding{"33} & \ding{"33} &                &               \\
$3^22$     &                &                & \ding{"33} &                & \ding{"33} &                \\
$41^4$     & \ding{"33} &                &                   &  \ding{"33}  &                 &                \\
$421^2$    & \ding{"33} &                &                &                &               &  \ding{"33}   \\
$42^2$     & \ding{"33} &                &                &                &               &  \ding{"33}   \\
$431$     &                & \ding{"33} &                & \ding{"33}   &                &                \\
$4^2$     &                & \ding{"33} &                & \ding{"33}   &                &                \\
$51^3$   &                & \ding{"33} &                & \ding{"33}   &                &                \\
$521$    &                & \ding{"33} &                &                 & \ding{"33} &               \\
$53$      & \ding{"33} &                &                   &  \ding{"33}  &                 &                \\
$61^2$   & \ding{"33} &                &                   &  \ding{"33}  &                 &                \\
$62$     & \ding{"33} &                &                &                & \ding{"33} &                \\
$71$      & \ding{"33} &                &                   &  \ding{"33}  &                 &                \\
$8$     &                & \ding{"33} &                & \ding{"33}   &                &                \\
Total & 11 & 9 & 2 & 11 & 9 & 2 \\
\botrule
\end{tabular*}
\end{table}

\section{Generating functions}\label{sec3}
In this section we establish the following formulae for the 
generating functions of the mex-related functions.
\begin{proposition}\label{prop:2024oct16_2}
The following identities hold:
\begin{align}
\sum_{n \geq 0} p^{(r)}_{2,1}(n) q^n
&=  \sum_{n \geq 0} \frac{q^{n^2} Z_{2n,n}^{(r)}(q)}{(q;q)_{2n}}, \label{eq:2024nov21_1} \\
\sum_{n \geq 0} p^{(r)}_{2,2}(n) q^n 
&=  \sum_{n \geq 0} \frac{q^{n^2+n} Z_{2n+1,n}^{(r)}(q)}{(q;q)_{2n+1}},\label{eq:2024nov21_2}
\end{align}
where
\begin{equation}\label{eq:2024nov22_1}
Z_{2n+a,n}^{(r)}(q) =
{2n+a \brack n-r}
-
{2n+a \brack n-r-1}.
\end{equation}
\end{proposition}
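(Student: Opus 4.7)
The plan is to show that both sides of \eqref{eq:2024nov21_1} (with $a=0$) and \eqref{eq:2024nov21_2} (with $a=1$) coincide with the closed form $\frac{q^{r(r+a)}(1-q^{2r+a+1})}{(q;q)_\infty}$, so that each identity is obtained by comparison.

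First I would compute the left-hand sides directly from the definition of the mex statistics. A partition $\lambda$ satisfies ${\rm mex}_{2,1}(\lambda)=2r+1$ precisely when the odd parts $1,3,\dots,2r-1$ each appear at least once, the part $2r+1$ is absent, and all other parts (odd parts $\geq 2r+3$ and all even parts) are unrestricted. Multiplying the corresponding factors and using $(q;q^2)_\infty(q^2;q^2)_\infty=(q;q)_\infty$ yields
\[
\sum_{n\geq 0} p^{(r)}_{2,1}(n)\,q^n \;=\; \frac{q^{r^2}}{(q;q^2)_r\,(q^{2r+3};q^2)_\infty\,(q^2;q^2)_\infty} \;=\; \frac{q^{r^2}(1-q^{2r+1})}{(q;q)_\infty}.
\]
The computation for ${\rm mex}_{2,2}$ is completely parallel, with the required parts $2,4,\dots,2r$ summing to $r(r+1)$, and produces $\frac{q^{r(r+1)}(1-q^{2r+2})}{(q;q)_\infty}$. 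These match the common form above for $a=0$ and $a=1$.

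For the right-hand sides, the key elementary observation is
\[
\frac{1}{(q;q)_{2n+a}}{2n+a \brack n-r} \;=\; \frac{1}{(q;q)_{n-r}\,(q;q)_{n+r+a}},
\]
immediate from \eqref{eq:gaussianpoly}, which rewrites the sum as the telescoping difference $F_r^{(a)}(q)-F_{r+1}^{(a)}(q)$, where
\[
F_r^{(a)}(q) \;:=\; \sum_{n\geq 0}\frac{q^{n(n+a)}}{(q;q)_{n-r}\,(q;q)_{n+r+a}},
\]
with the usual convention $1/(q;q)_k=0$ for $k<0$. Substituting $m=n-r$ and using $n(n+a)=m(m+2r+a)+r(r+a)$ converts this into $q^{r(r+a)}\sum_{m\geq 0}q^{m(m+2r+a)}/[(q;q)_m(q;q)_{m+2r+a}]$, which is the classical Durfee-rectangle identity (each partition uniquely contains a maximal $m\times(m+2r+a)$ rectangle, with a partition of $\leq m$ parts to its right and a partition of parts $\leq m+2r+a$ below). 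Hence $F_r^{(a)}(q)=q^{r(r+a)}/(q;q)_\infty$, and telescoping gives
\[
F_r^{(a)}(q)-F_{r+1}^{(a)}(q) \;=\; \frac{q^{r(r+a)}-q^{(r+1)(r+1+a)}}{(q;q)_\infty} \;=\; \frac{q^{r(r+a)}(1-q^{2r+a+1})}{(q;q)_\infty},
\]
which agrees with the left-hand side.

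The argument is essentially routine; its only delicate points are bookkeeping, namely the exponent check $(r+1)(r+1+a)=r(r+a)+2r+a+1$, the boundary case $r=0$ where the required-parts product is empty, and the verification that $k=2r+a\geq 0$ for every allowed pair so the Durfee-rectangle identity is applicable. No substantive obstacle is anticipated.
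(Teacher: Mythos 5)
Your proof is correct and follows essentially the same route as the paper: telescoping the difference of Gaussian binomials, shifting the summation index to extract $q^{r(r+a)}$, and collapsing the remaining sum to $1/(q;q)_\infty$ before matching against the explicit product form of the mex generating function. The only (cosmetic) difference is that you justify the identity $\sum_{m\geq 0} q^{m(m+k)}/[(q;q)_m(q;q)_{m+k}] = 1/(q;q)_\infty$ combinatorially via maximal Durfee rectangles, whereas the paper cites Cauchy's formula \eqref{eq:2024july11_3} with $z=q^{k+1}$ --- these are the same identity.
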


\begin{proof}
First we prove \eqref{eq:2024nov21_1}.
Its right hand side can be written as
\begin{equation*}
\sum_{n \geq 0} \frac{q^{n^2} Z_{2n,n}^{(r)}(q)}{(q;q)_{2n}} =
\sum_{n \geq r} \frac{q^{n^2} }{(q;q)_{2n}} { 2n \brack n-r } \\
- \, ( \quad ``r \rightarrow r + 1" \quad ),
\end{equation*}
where we let the second term be obtained from the first term
by replacing $r$ by $r +1$.
The first term can be written as
\red{
\begin{align}
\sum_{n \geq r} \frac{q^{n^2} }{(q;q)_{2n}} { 2n \brack n-r }  &=
\sum_{n \geq r} \frac{q^{n^2} }{(q;q)_{n-r} (q;q)_{n+r}} \nonumber\\
&=
q^{r^2}\sum_{n \geq r} \frac{q^{(n-r)(n+r)} }{(q;q)_{n-r} (q;q)_{n+r}} \nonumber\\
&= 
\frac{q^{r^2}}{(q;q)_{\infty}}.\label{eq:2025june23_1}
\end{align}
In the last line we used the classical Durfee rectangle identity
\begin{equation}\label{eq:2024july11_3}
\sum_{n \geq r} \frac{q^{(n-r)(n+r+a)} }{(q;q)_{n-r} (q;q)_{n+r+a}} = \frac{1}{(q;q)_{\infty}},
\end{equation}
for any fixed integers $r, a$ satisfying $2r+a \geq 0$.}
Therefore
\begin{align*}
\sum_{n \geq 0} \frac{q^{n^2} Z_{2n,n}^{(r)}(q)}{(q;q)_{2n}} &=
\frac{q^{r^2} - q^{(r+1)^2}}{(q;q)_{\infty}} \\
&= 
\frac{q^{r^2}(1 - q^{2r+1})}{(q;q)_{\infty}} \\
&= \frac{q^{r^2}}{\prod_{m=1, m \ne 2r+1}^\infty (1-q^m)}.
\end{align*}
Since $r^2 = 1 + 3 + \dots + (2 r-1)$, one finds that
this is an expression for the generating function of $p^{(r)}_{2,1}(n)$.

Next we prove \eqref{eq:2024nov21_2}.
Its right hand side can be written as
\begin{equation*}
\sum_{n \geq 0} \frac{q^{n^2+n} Z_{2n+1,n}^{(r)}(q)}{(q;q)_{2n+1}} =
\sum_{n \geq r} \frac{q^{n^2+n} }{(q;q)_{2n+1}} { 2n+1 \brack n-r } \\
- \, ( \quad ``r \rightarrow r + 1" \quad ),
\end{equation*}
where we let the second term be obtained from the first term
by replacing $r$ by $r +1$.
The first term can be written as
\red{
\begin{align}
\sum_{n \geq r} \frac{q^{n^2+n} }{(q;q)_{2n+1}} { 2n+1 \brack n-r }  &=
\sum_{n \geq r} \frac{q^{n^2+n} }{(q;q)_{n-r} (q;q)_{n+r+1}} \nonumber\\
&=q^{r^2+r}\sum_{n \geq r} \frac{q^{(n-r)(n+r+1)} }{(q;q)_{n-r} (q;q)_{n+r+1}} \nonumber\\
&= 
\frac{q^{r(r+1)}}{(q;q)_{\infty}}.\label{eq:2025june23_2}
\end{align}
}
Therefore
\begin{align*}
\sum_{n \geq 0} \frac{q^{n^2+n} Z_{2n+1,n}^{(r)}(q)}{(q;q)_{2n+1}} &=
\frac{q^{r(r+1)} - q^{(r+1)(r + 2)}}{(q;q)_{\infty}} \\
&= 
\frac{q^{r(r+1)}(1 - q^{2(r+1)})}{(q;q)_{\infty}} \\
&= \frac{q^{r(r+1)}}{\prod_{m=1, m \ne 2(r+1)}^\infty (1-q^m)}.
\end{align*}
Since $r(r+1) = 2 + 4 + \dots + 2 r$, one finds that
this is an expression for the generating function of $p^{(r)}_{2,2}(n)$.
\end{proof}

\begin{remark}
By summing both sides of \eqref{eq:2024nov21_2} over even $r$'s,
we obtain
\begin{equation}\label{eq:2024dec25_1}
\sum_{n \geq 0} p_{2,2}(n) q^n 
=  \sum_{n \geq 0} \frac{q^{n^2+n} B_n(q)}{(q;q)_{2n+1}},
\end{equation}
where
\begin{equation}\label{eq:2024july2_1}
B_n(q) = \sum_{r=0}^n (-1)^r {2n+1 \brack n-r},
\end{equation}
which we have already mentioned in Sect.~\ref{sec1}.
This expression for the generating function of $p_{2,2}(n)$ was first derived 
through a different argument by
Andrews and Newman \cite{AN2020}.
\end{remark}

{
\begin{remark}
Based on the fact that the polynomials defined in \eqref{eq:2024nov22_1} are
Kostka polynomials \cite{Warnaar2001, Warnaar2004},
the identities \eqref{eq:2024nov21_1} and \eqref{eq:2024nov21_2} 
can be viewed as affine Lie algebra 
character formulae. We present a brief review on this subject in Appendix \ref{secB}.
\end{remark}
\begin{remark}
Consider the sequences $\{ \gamma_L \}_{L \geq 0}, \{ \delta_L \}_{L \geq 0}$
given by
\begin{equation*}
\gamma_L = \frac{a^L q^{L^2}}{(aq;q)_\infty} \quad \mbox{and} \quad
\delta_L = a^L q^{L^2}.
\end{equation*}
This pair of sequences is known as a conjugate 
Bailey pair \cite{Warnaar2001, SW2002} relative to $a$,
and satisfies
\begin{equation*}
\gamma_L = \sum_{r=L}^\infty \frac{\delta_r}{(q;q)_{r-L} (aq;q)_{r+L}}. 
\end{equation*}
Both \eqref{eq:2025june23_1} and \eqref{eq:2025june23_2} are special cases of
this equation, with $a=1$ and $a=q$ respectively.
\end{remark}
}

\section{A polynomial bosonic form for statistical configuration sums}\label{sec4}
In this section we present {a derivation for} 
the formulae for the statistical configuration sums
associated with an integrable cellular automaton
which the author studied some time ago \cite{Takagi1}, 
with an argument for proving them being supplied
\footnote{Part of the following results are also presented in Japanese \cite{Takagi2025}.}.
{Although an essentially equivalent result had been obtained much earlier 
by Andrews, Baxter and Forrester \cite{ABF}\footnote{We provide a brief explanation on this subject in Appendix  \ref{secC}.}, 
we present our own one 
because our derivation here well fits the definition
of the energy \eqref{eq:2024july3_9} defined in the next subsection and hence is suitable for
the subsequent arguments.}
\subsection{A linear recursion relation associated with the Gaussian polynomials}
We seek a family of polynomials $G_{L,s}(q)$ satisfying
the recursion relation
\begin{equation} \label{eq:may13_2}
G_{L,s}(q) = G_{L-1,s}(q) + \sum_{k=1}^s q^{L-k}
G_{L-k-1,s-k}(q),
\end{equation}
for $s<L$.

\begin{proposition}\label{prop:2025oct24_1}
\red{For $L$ a nonnegative integer, let 
\begin{equation} \label{eq:may13_2x_20250ct22}
G_{L-1,L}(q) = \delta_{L,0}, 
\end{equation}
and consider the recursion relation \eqref{eq:may13_2} with this boundary condition.
Then it determines a unique polynomial $G_{L,s}(q) =: X_{L,s}(q)$ for every $0 \leq s \leq L$.}
This polynomial has nonnegative coefficients.
\end{proposition}

\begin{proposition}\label{prop:2025oct24_2}
\red{For $L$ a nonnegative integer, let 
\begin{equation} \label{eq:may13_2xx_20250ct22}
G_{2L-1,L}(q) = \delta_{L,0}, 
\end{equation}
and consider the recursion relation \eqref{eq:may13_2} with this boundary condition.
Then it determines a unique polynomial $G_{L,s}(q) =: X^+_{L,s}(q)$ for every
$0 \leq s \leq \lfloor L/2 \rfloor$.}
This polynomial has nonnegative coefficients.
\end{proposition}
\red{
\begin{proof}
Since $G_{L,s}(q)$ for $0 \leq s \leq L$ (resp.~$0 \leq s \leq \lfloor L/2 \rfloor$)
is expressed as a totally positive linear combination of 
$G_{L',s'}(q)$ with $L'<L$ and $0\leq s'\leq s$ such that $L'-s'\geq -1$
(resp.~$L'-2s'\geq -1$), the claim of Proposition \ref{prop:2025oct24_1} 
(resp.~Proposition \ref{prop:2025oct24_2} ) follows
by induction on $s$ and $L$.
\end{proof}
}

Here we summarize the definition of the above polynomials.
\begin{definition}\label{def:2}
Let $X_{L,s}(q)$ and $X^+_{L,s}(q)$ denote the polynomials with
nonnegative coefficients that are determined by;
\begin{enumerate}
\item Recursion relation \eqref{eq:may13_2} with boundary condition \eqref{eq:may13_2x_20250ct22} for $X_{L,s}(q)$.
\item Recursion relation \eqref{eq:may13_2} with boundary conditions \eqref{eq:may13_2xx_20250ct22} for $X^+_{L,s}(q)$.
\end{enumerate}
\end{definition}

These polynomials can be expressed by using Gaussian polynomials \eqref{eq:gaussianpoly}.

\begin{proposition}\label{prop:6}
These polynomials take the following forms
\begin{align}
X_{L,s}(q) &=
{ L \brack s }, \label{eq:may14_8}\\
X^+_{L,s}(q) &=
{ L \brack s }- { L \brack s-1}. \label{eq:may17_1}
\end{align}
\end{proposition}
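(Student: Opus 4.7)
The plan is to exploit the uniqueness statements already proved for $X_{L,s}(q)$ and $X^+_{L,s}(q)$: I will verify that the right-hand sides of \eqref{eq:may14_8} and \eqref{eq:may17_1} satisfy the relevant recursion and boundary conditions, and invoke uniqueness. Thus the content reduces to proving one non-trivial identity for the Gaussian polynomial, namely
\begin{equation*}
{L \brack s} = {L-1 \brack s} + \sum_{k=1}^{s} q^{L-k} {L-k-1 \brack s-k}.
\end{equation*}

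For the first identity, the boundary conditions ${L \brack 0}={L \brack L}=1$ are immediate. For the recursion, I would prove the displayed formula either combinatorially—interpreting ${L \brack s}$ as the generating polynomial for partitions fitting inside an $s\times(L-s)$ box, and classifying such a partition by $k$, the length of its first column (so the remaining shape, after peeling off that column and subtracting $1$ from each of its $k$ parts, is a partition inside a $k\times(L-s-1)$ box contributing $q^{k}{L-s+k-1 \brack k}$)—or, equivalently, by a short induction on $s$ combining the two Pascal-type recursions
\begin{equation*}
{n \brack k} = {n-1 \brack k} + q^{n-k}{n-1 \brack k-1}, \qquad {n \brack k} = q^{k}{n-1 \brack k} + {n-1 \brack k-1}.
\end{equation*}
Concretely, Pascal reduces ${L \brack s}-{L-1 \brack s}$ to $q^{L-s}{L-1 \brack s-1}$, and reindexing by $k'=s-k$ shows that the stated sum collapses to the same expression via the reorganized identity ${m \brack n}=\sum_{k'=0}^{n}q^{k'}{m-n+k'-1 \brack k'}$, which is exactly the combinatorial decomposition above.

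For the second identity, I would observe that the recursion \eqref{eq:may13_2} is linear in $G_{L,s}(q)$, so if both ${L \brack s}$ and ${L \brack s-1}$ (the latter viewed as the function $(L,s)\mapsto {L \brack s-1}$) satisfy it, then so does their difference. That ${L \brack s-1}$ satisfies \eqref{eq:may13_2} follows by applying Step 1 with $s$ replaced by $s-1$ and noting that the extra $k=s$ term can be appended for free because ${L-s-1 \brack -1}=0$ under the convention in \eqref{eq:gaussianpoly}. It then remains to check the boundary conditions \eqref{eq:may13_2xx}: ${L \brack 0}-{L \brack -1}=1-0=1$, and ${2s-1 \brack s}-{2s-1 \brack s-1}=0$ by the symmetry ${n \brack k}={n \brack n-k}$. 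Uniqueness of the solution to \eqref{eq:may13_2}--\eqref{eq:may13_2xx} then yields $X^+_{L,s}(q)={L \brack s}-{L \brack s-1}$.

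The main obstacle is Step 1, the multi-term recursion for ${L \brack s}$, since it is not one of the standard two-term Pascal identities. Once that identity is in hand, the rest is linearity plus the one-line symmetry argument for the second boundary condition, so the entire proposition follows cleanly from the uniqueness statements established earlier in this section.
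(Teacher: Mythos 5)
Your proposal is correct and follows essentially the same route as the paper: verify that $\left[\begin{smallmatrix}L\\ s\end{smallmatrix}\right]$ and the difference $\left[\begin{smallmatrix}L\\ s\end{smallmatrix}\right]-\left[\begin{smallmatrix}L\\ s-1\end{smallmatrix}\right]$ satisfy the recursion \eqref{eq:may13_2} and the respective boundary conditions, then invoke uniqueness, with the multi-term recursion obtained by combining the two Pascal-type identities (the paper does this by iterated substitution of \eqref{eq:2024july3_3} into \eqref{eq:2024july3_2}, which is your induction in disguise) and the second boundary condition handled by the symmetry $\left[\begin{smallmatrix}L\\ s\end{smallmatrix}\right]=\left[\begin{smallmatrix}L\\ L-s\end{smallmatrix}\right]$. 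Your explicit remark that the shifted function $(L,s)\mapsto\left[\begin{smallmatrix}L\\ s-1\end{smallmatrix}\right]$ satisfies the recursion because the extra $k=s$ term vanishes is a point the paper leaves implicit, and your optional combinatorial proof of the multi-term identity is a valid alternative not used in the paper.
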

\begin{proof}
\red{
First we note that by Definition \ref{def:2} we have
\begin{equation} \label{eq:may13_2x}
X_{L,L}(q) = 1 \qquad \mbox{for} \qquad L \geq 0, 
\end{equation}
and 
\begin{equation} \label{eq:may13_2xx}
X^+_{0,0}(q) =1, \quad X^+_{2s-1,s}(q) = 0 \qquad \mbox{for} \qquad s \geq 1.
\end{equation}
We adopt \eqref{eq:may13_2x} and \eqref{eq:may13_2xx} as alternative
boundary conditions for \eqref{eq:may13_2x_20250ct22} and \eqref{eq:may13_2xx_20250ct22}, since
the recursion relation \eqref{eq:may13_2} with these conditions
also uniquely determines the polynomials $X_{L,s}(q)$ and $X^+_{L,s}(q)$. }

The Gaussian polynomials satisfy the following identities \cite{A}:
\begin{align}
{ L \brack s }  &= { L \brack L-s }, \label{eq:2024july3_1} \\ 
{ L \brack s } &={ L-1 \brack s } + q^{L-s} { L-1 \brack s-1 }. \label{eq:2024july3_2}
\end{align}
First we prove \eqref{eq:may14_8}.
By definition the Gaussian polynomials satisfy the boundary condition \eqref{eq:may13_2x}.
\red{Let $1 \leq s \leq L-1$.
For the identity (\ref{eq:2024july3_2}), we see that the Gaussian polynomial in
the second term of its right hand side can
be written as
\begin{align*}
{ L-1 \brack s-1 }  &= { L-1 \brack L-s } \nonumber\\
&= \sum_{j=0}^{s-1} q^j { L-s-1+j \brack L-s-1 } \nonumber\\
&= \sum_{k=1}^{s} q^{s-k} { L-k-1 \brack L-s-1 } \nonumber\\
&= \sum_{k=1}^{s} q^{s-k} { L-k-1 \brack s-k },
\end{align*}
where in the second line we used another formula (\cite[Theorem 3.4, (3.3.9)]{A})
related to the Gaussian polynomials. 
}
%
\red{
As a result, we obtain 
\begin{equation*}
{ L \brack s } ={ L-1 \brack s } + \sum_{k=1}^{s} q^{L-k} { L-k-1 \brack s-k },
\end{equation*}
namely the recursion relation \eqref{eq:may13_2}.}
Since $X_{L,s}(q)$ is uniquely determined by
the boundary condition and the recursion relation, 
we get the expression \eqref{eq:may14_8}. 

Next we prove \eqref{eq:may17_1}.
\red{By replacing $s$ by $s-1$ in the above result, we obtain
\begin{equation*}
{ L \brack s-1 } ={ L-1 \brack s-1 } + \sum_{k=1}^{s} q^{L-k} { L-k-1 \brack s-k-1 },
\end{equation*}
where we extended the upper bound for the summation to $k=s$
by the null condition in the definition of Gaussian polynomials \eqref{eq:gaussianpoly}.
Then, by the linearity of the recursion relation,
the right hand side of \eqref{eq:may17_1} must satisfy the
same recursion relation \eqref{eq:may13_2}.
}
In addition, by that \red{null condition} and
the identity \eqref{eq:2024july3_1}, 
we see that the right hand side of \eqref{eq:may17_1} satisfies 
the boundary condition \eqref{eq:may13_2xx}.
Since $X^+_{L,s}(q)$ is also uniquely determined by
the boundary condition and the recursion relation, 
we get the expression \eqref{eq:may17_1}.
\end{proof}


\subsection{Canonical partition functions: statistical configuration sums over bit sequences}\label{sec4_2}
We consider the bit sequences of
$L-s$ zeros and $s$ ones.
More explicitly we define
\begin{equation}
\mathcal{H} (L,s) = \{ \eta \in \{ 0,1 \}^L \, \mid \, |\eta| = s \},
\end{equation}
where for $\eta = (\eta_1, \dots , \eta_L)\in \{ 0,1 \}^L$
we let $|\eta| =\sum_{1 \leq i \leq L} \eta_i$.
In what follows,
we sometimes write $(\eta_1, \dots , \eta_L)$ as $\eta_1 \dots \eta_L$.

Let $H$ denote the function from $ \{0,1\}^2$ to $\{0,1\}$ that is given by
$H(0,1)=1, H(0,0)=H(1,0)=H(1,1)=0$.
For bit sequences $\eta \in \mathcal{H} (L,s)$, we define their energy by
\begin{equation}\label{eq:2024july3_9} 
E(\eta) = \sum_{1 \leq j \leq L-1} j H(\eta_j, \eta_{j+1}).
\end{equation}
We define the generating function enumerating
the bit sequences with the energy as
\begin{equation}
Z_{L,s}(q) =
\sum_{\eta \in \mathcal{H} (L,s)} q^{E(\eta)},
\end{equation}
\red{for $L \geq 1$, and let $Z_{0,0}(q) =1$ \footnote{Accordingly, we formally define 
$\mathcal{H} (0,0)$ to be the set 
that has only one element with energy $0$.}.}
Borrowing a terminology from statistical mechanics, we
call this generating function a {\em partition function}.

\begin{example}
Let $L=5,s=2$.
The ten elements of $\mathcal{H} (5,2)$ are 
$11000 \, (E=0)$, $01100, \, (E=1)$, $00110, 10100 \, (E=2)$, 
$00011, 10010 \, (E=3)$, $01010, 10001 \, (E=4)$,
$01001 \, (E=5)$, and $00101 \, (E=6)$.  
Therefore we have $Z_{5,2}(q) =1 + q + 2q^2+2q^3+2q^4+ q^5+q^6$.
\end{example}

\begin{proposition}\label{prop:2024nov27_1}
\begin{equation}
Z_{L,s}(q)  =
{ L \brack s }. \label{eq:2024july3_5}\\
\end{equation}
\end{proposition}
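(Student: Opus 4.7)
My plan is to show that $Z_{L,s}(q)$ satisfies the same recursion relation \eqref{eq:may13_2} and the same boundary conditions \eqref{eq:may13_2x} as $X_{L,s}(q)$, and then conclude $Z_{L,s}(q)=X_{L,s}(q)={L \brack s}$ by invoking Proposition \ref{prop:6}.

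The boundary conditions are immediate: $\mathcal{H}(L,0)=\{0^L\}$ and $\mathcal{H}(L,L)=\{1^L\}$, and in each case the only bit sequence has no ``01'' pattern, so its energy is zero. Hence $Z_{L,0}(q)=Z_{L,L}(q)=1$.

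For the recursion with $s<L$, I would partition $\mathcal{H}(L,s)$ according to the length $k\in\{0,1,\dots,s\}$ of the maximal trailing run of 1's in $\eta$. The case $k=0$ consists of the sequences with $\eta_L=0$; then $H(\eta_{L-1},0)=0$, so the term $j=L-1$ in the definition of $E(\eta)$ vanishes and the energy agrees with the energy of the truncation $\eta_1\cdots\eta_{L-1}\in\mathcal{H}(L-1,s)$. This subclass therefore contributes $Z_{L-1,s}(q)$. For $1\le k\le s$ (possible since $s<L$ forces a 0 immediately before the trailing block of 1's), each such $\eta$ has the form
\begin{equation*}
\eta=(\eta_1,\dots,\eta_{L-k-1},0,\underbrace{1,\dots,1}_{k}).
\end{equation*}
The trailing ones contribute no $H$-term, and the transition $\eta_{L-k-1}\mapsto 0$ also contributes zero. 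The only new contribution compared to the energy of $(\eta_1,\dots,\eta_{L-k-1})\in\mathcal{H}(L-k-1,s-k)$ comes from the ``01'' at position $j=L-k$, yielding $L-k$. Summing $q^{E(\eta)}$ over this subclass gives $q^{L-k}Z_{L-k-1,s-k}(q)$.

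Adding the contributions yields
\begin{equation*}
Z_{L,s}(q) = Z_{L-1,s}(q) + \sum_{k=1}^{s} q^{L-k} Z_{L-k-1,s-k}(q),
\end{equation*}
which is exactly the recursion \eqref{eq:may13_2}. Since $X_{L,s}(q)$ is the unique solution to this system and Proposition \ref{prop:6} identifies $X_{L,s}(q)$ with ${L\brack s}$, the proof is complete. There is no serious obstacle here; the main care is in verifying that the ``boundary'' term $j=L-1$ and the transition into the trailing block of 1's do not produce unwanted energy contributions, which is why conditioning on the trailing run of 1's (rather than simply on $\eta_L$) is the natural decomposition to match the form of the recursion.
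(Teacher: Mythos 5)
Your proposal is correct and takes essentially the same approach as the paper: conditioning on the length of the maximal trailing run of ones is exactly the paper's classification by the position of the last zero, and both arguments then verify the energy bookkeeping, match the recursion \eqref{eq:may13_2} with boundary condition \eqref{eq:may13_2x}, and conclude via Proposition \ref{prop:6}.
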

\begin{proof}
By Proposition \ref{prop:6}, it suffices to show that $Z_{L,s}(q)  =X_{L,s}(q)$.
\red{Since the only one element that belongs to $\mathcal{H} (L,L)$ 
has zero energy, }
$Z_{L,s}(q)$ satisfies the boundary condition \eqref{eq:may13_2x}
and the remaining task for us is to verify the recursion relation \eqref{eq:may13_2}.

By classifying the elements of $\mathcal{H} (L,s)$ by the positions $L-k$ of the last zero, we have
\begin{align*}
\mathcal{H} (L,s) &= \coprod_{k = 0}^s \mathcal{H}_{(k)} (L,s),\\
\mathcal{H}_{(k)} (L,s) &:= \{ \eta \in \mathcal{H} (L,s) \, \mid \, 
\max \{ \alpha \, \mid \, \eta_\alpha=0 \} = L-k \}. 
\end{align*}
According to this disjoint union, the partition function takes the form
\begin{equation*}
Z_{L,s}(q) =
\sum_{\eta \in \mathcal{H}_{(0)}  (L,s)} q^{E(\eta)} +
\sum_{k=1}^s \sum_{\eta \in \mathcal{H}_{(k)}  (L,s)} q^{E(\eta)}.
\end{equation*}
In the first term of the right hand side,
we have 
$E(\eta) =\sum_{1 \leq j \leq L-2} j H(\eta_j, \eta_{j+1})$, because for every
$\eta \in \mathcal{H}_{(0)}  (L,s)$ we have
$H(\eta_{L-1},\eta_L) =0$.
In addition, we have $\sum_{1 \leq i \leq L-1}\eta_i = s$.
This implies that the procedure of removing the last digit $0$ induces an energy preserving
bijection from 
$\mathcal{H}_{(0)}  (L,s)$ to $\mathcal{H}  (L-1,s)$.
Therefore, this first term equals to the first term of the right hand side of \eqref{eq:may13_2}.
In the second term of the right hand side, for each $k$,
we have 
$E(\eta) = L-k + \sum_{1 \leq j \leq L-k-1} j H(\eta_j, \eta_{j+1})$, because for every
$\eta \in \mathcal{H}_{(k)}  (L,s)$ we have
$H(\eta_{L-k+j},\eta_{L-k+j+1}) =\delta_{j,0}$ for $0 \leq j \leq k-1$.
In addition, we have $\sum_{1 \leq i \leq L-k-1}\eta_i = s-k$.
This implies that the procedure of removing the last $k+1$ digits $01\dots1$
induces a bijection from 
$\mathcal{H}_{(k)}  (L,s)$ to $\mathcal{H}  (L-1-k,s-k)$ that reduces 
the value of the energy by $L-k$.
Therefore, this second term as a whole 
equals to the second term of the right hand side of \eqref{eq:may13_2}.
\end{proof}

For $\eta \in \mathcal{H} (L,s)$ let
$S(\eta) = (S_i (\eta))_{0 \leq i \leq L}$ be its {\em path-encoding} \cite{CKST2023}
that is defined as
\begin{align}
S_0 (\eta) &= 0,  \nonumber\\
\quad S_i (\eta) &= S_{i-1} (\eta) +1 - 2 \eta_i \,(1 \leq i \leq L). \label{eq:2024nov27_4}
\end{align}
See Fig~\ref{fig:pathenc2} for an example.
In what follows we call $S(\eta)$ the {\em path} for the bit sequence $\eta$.

\begin{figure}[h]
\centering
\includegraphics[height=2cm]{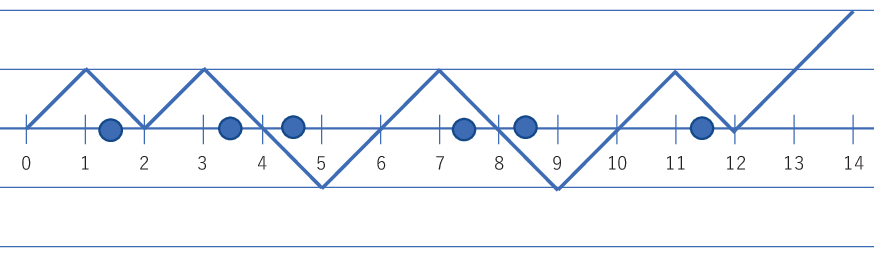}
\caption{A graphical representation of the path $S(\eta)$ for 
$\eta=01011001100100$ by line segments.
We put a mark $\bullet$ between coordinate $i-1$ and $i$ if $\eta_i = 1$.}
\label{fig:pathenc2}
\end{figure}

Let $\varepsilon_1(\eta)$ be the absolute value of
the minimum of the path $S(\eta)$, i.e. 
\begin{equation}\label{eq:2024nov27_5}
\varepsilon_1(\eta) := - \min_{0 \leq i \leq L} S_i(\eta),
\end{equation}
by which we also define the function $\varepsilon_1: \mathcal{H} (L,s) \rightarrow [0,s].$
By classifying the elements of $\mathcal{H} (L,s)$ by their $\varepsilon_1$-values,
we have
\begin{align}
\mathcal{H} (L,s) &= \coprod_{r = 0}^s \mathcal{H}^{(r)} (L,s),\label{eq:2024july3_10} \\
\mathcal{H}^{(r)} (L,s) &:= \{ \eta  \mid
\varepsilon_1(\eta) = r \}. \nonumber 
\end{align}

\begin{proposition}\label{prop:5}
For every integer $L \geq 2$, 
and for every pair of integers $r,s$ satisfying the conditions
$0 \leq s \leq \lfloor L/2 \rfloor -1, 0 \leq r \leq s$,
there is an energy preserving bijection between $\mathcal{H}^{(r)} (L,s)$ and
$\mathcal{H}^{(r + 1)} (L,s+1)$ with respect to the energy \eqref{eq:2024july3_9}.
\end{proposition}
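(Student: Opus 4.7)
The plan is to construct an explicit bijection by flipping a single bit, chosen based on the geometry of the path. Concretely, given $\eta \in \mathcal{H}^{(r)}(L,s)$ with path $S = S(\eta)$, let $i^{*} = i^{*}(\eta)$ be the \emph{largest} index in $\{0,1,\dots,L\}$ with $S_{i^{*}} = -r$. The hypothesis $s \leq \lfloor L/2 \rfloor - 1$ gives $S_{L} = L - 2s \geq 2 > -r$, so $i^{*} \leq L-1$. Because $i^{*}$ is the last minimum position, one has $S_{i^{*}+1} = -r+1 > S_{i^{*}}$, forcing $\eta_{i^{*}+1} = 0$. Define $\varphi(\eta)$ to be the sequence obtained from $\eta$ by flipping $\eta_{i^{*}+1}$ from $0$ to $1$. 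For the inverse, given $\eta' \in \mathcal{H}^{(r+1)}(L,s+1)$ with path $S' = S(\eta')$, let $j_{0} = j_{0}(\eta')$ be the \emph{smallest} index with $S'_{j_{0}} = -r-1$; then necessarily $\eta'_{j_{0}} = 1$, and define $\psi(\eta')$ by flipping $\eta'_{j_{0}}$ from $1$ to $0$.

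I would then verify three things. First, $\varphi(\eta) \in \mathcal{H}^{(r+1)}(L,s+1)$: the new path agrees with $S$ up to index $i^{*}$ and equals $S - 2$ afterwards; since $S_{j} \geq -r+1$ for $j > i^{*}$ by the choice of $i^{*}$, the new path takes values $\geq -r-1$, with equality attained at $j = i^{*}+1$. Symmetrically, $\psi(\eta') \in \mathcal{H}^{(r)}(L,s)$, since shifting the tail by $+2$ from $j_{0}$ onward raises the minimum to exactly $-r$, attained at $j_{0}-1$ (because $S'_{j_{0}-1} = S'_{j_{0}} + 1 = -r$).

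Second, and this is the delicate point, $\varphi$ preserves the energy. A direct computation of the difference shows that flipping $\eta_{i}$ from $0$ to $1$ changes the energy by
\begin{equation*}
\Delta E \;=\; (i-1)\,[\eta_{i-1} = 0] \;-\; i\,[\eta_{i+1} = 1],
\end{equation*}
with the convention that out-of-range indicators vanish. Applied at $i = i^{*}+1$, both indicators vanish: when $i^{*} \geq 1$ we must have $\eta_{i^{*}} = 1$ (otherwise $S_{i^{*}} > S_{i^{*}-1}$ would give $S_{i^{*}-1} < -r$, contradicting minimality of $-r$); when $i^{*}+2 \leq L$ we must have $\eta_{i^{*}+2} = 0$ (otherwise the path would return to $-r$ after $i^{*}$, contradicting maximality of $i^{*}$). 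The boundary cases $i^{*}=0$ and $i^{*}=L-1$ are automatic since the prefactors $i-1$ or the missing indicator make the corresponding term zero.

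Third, the two maps are mutual inverses: the analysis above shows that $j_{0}(\varphi(\eta)) = i^{*}(\eta)+1$ and $i^{*}(\psi(\eta')) = j_{0}(\eta')-1$, since in each case flipping restores the unique minimum position and the shifted portion of the path. The main obstacle I anticipate is the energy-preservation step, where the local structure of the path at the last occurrence of the minimum must be pinned down carefully; once the two indicator identities above are established, everything else follows by routine bookkeeping.
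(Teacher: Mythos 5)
Your proposal is correct and takes essentially the same route as the paper: the paper's map $\tilde{f}_1$ flips the bit immediately following the last position where the path attains its minimum $-r$ (phrased there as the local replacement $(1,0,0)\to(1,1,0)$, or $(0,0)\to(1,0)$ when that position is $0$), and its inverse $\tilde{e}_1$ is defined via the first position of the new minimum, exactly as your $\varphi$ and $\psi$. Your explicit $\Delta E$ computation with the two indicator identities is just a reformulation of the paper's observation that the relevant local patterns carry the same $H$-values before and after the flip.
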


\begin{proof}
We construct the bijection explicitly.
See Fig \ref{fig:pathenc} for an example.

First we construct the map from $\mathcal{H}^{(r)} (L,s)$ to 
$\mathcal{H}^{(r + 1)} (L,s+1)$, denoted by $\tilde{f}_1$, as follows.
For all $\eta \in \mathcal{H}^{(r)} (L,s)$, we see that
if $S_i(\eta) = -r$ then $i \leq L-2$ because we have 
$S_L(\eta) =L-2s \geq 2$, and hence $S_{L-1}(\eta) \geq 1$.
\begin{itemize}
\item If $\max \{ i \mid S_i(\eta) = -r \} = 0$, then $r=0$.
In this case we have $(\eta_1, \eta_2) = (0,0)$.
Replace it by $(1,0)$.
\item If $i:=\max \{ j \mid S_j(\eta) = -r \} > 0$, then we have
$(\eta_i, \eta_{i+1},\eta_{i+2}) = (1,0,0)$.
Replace it by $(1,1,0)$.
\end{itemize}
By this procedure, whereas the number of \red{$1$s} becomes $s+1$ but the energy of 
the bit sequence does not change.
Although the values of $S_j$ for $j>i$ decrease by $2$,
they are greater than or equal to $-r-1$, and in particular we have $S_{i+1} (\tilde{f}_1 \eta)= -r -1$.
On the other hand, the values of $S_j$ for $j \leq i$ do not change, and hence are
greater than or equal to $-r$. 

Next we construct the map from $\mathcal{H}^{(r + 1)} (L,s+1)$ to $\mathcal{H}^{(r)} (L,s)$, 
denoted by $\tilde{e}_1$, as follows.
For all $\eta \in \mathcal{H}^{(r+1)} (L,s+1)$, we see that
if $S_i(\eta) = -r-1$ then $1 \leq i \leq L-1$, because we have 
$S_L(\eta) =L-2s-2 \geq 0$, and $S_0(\eta) =0$.
\begin{itemize}
\item If $\min \{ i \mid S_i(\eta) = -r-1 \} = 1$, then $r=0$.
In this case we have $(\eta_1, \eta_2) = (1,0)$.
Replace it by $(0,0)$.
\item If $i:=\min \{ j \mid S_j(\eta) = -r-1 \} > 1$, then we have
$(\eta_{i-1}, \eta_{i},\eta_{i+1}) = (1,1,0)$.
Replace it by $(1,0,0)$.
\end{itemize}
By this procedure, whereas the number of \red{$1$s} becomes $s$ but the energy of 
the bit sequence does not change.
Since the values of $S_j$ for $j \geq i$ increase by $2$,
they are greater than or equal to $-r+1$.
On the other hand, the values of $S_j$ for $j < i$ do not change, and hence are
greater than or equal to $-r$, and in particular we have $S_{i-1} (\tilde{e}_1 \eta)= -r$. 

It is also easy to see that the maps $\tilde{f}_1$ and $\tilde{e}_1$ are inverse to each other .
\end{proof}
\begin{figure}[htbp]
\centering
\includegraphics[height=5cm]{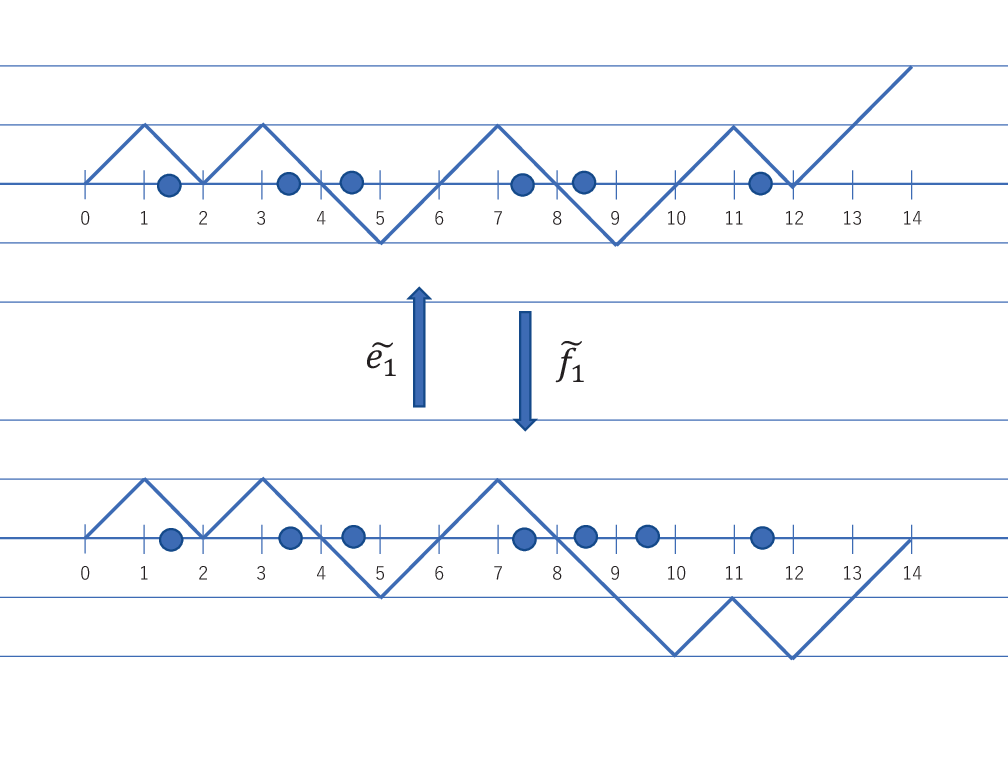}
\caption{A graphical representation of maps $\tilde{f}_1$ and $\tilde{e}_1$.
We have $\tilde{f}_1(01011001100100) = 01011001110100 \in \mathcal{H}^{(2)} (14,7)$ and $\tilde{e}_1(01011001110100) = 01011001100100 \in \mathcal{H}^{(1)} (14,6)$.
The values of the energy coincide as $H(01011001100100) = H(01011001110100) = 1+3+7+11=22$.}
\label{fig:pathenc}
\end{figure}

\begin{remark}
The notation $\tilde{f}_1, \tilde{e}_1$ for the maps in the proof of Proposition \ref{prop:5}
is so chosen as to coincide with the notation for the Kashiwara operators in the theory of
affine crystals. 
The notation $\varepsilon_1$ for the map defined in \eqref{eq:2024nov27_5} is also taken from
this theory.
From this point of view, the set of bit sequences $\mathcal{H} (L,s)$ is
regarded as a subset of $B^{\otimes L}$, where $B = \{ 0,1 \}$ is a classical crystal.
The energy \eqref{eq:2024july3_9} for bit sequences is used to interpret
the sequences as elements of an affine crystal. 
The function $H$ (but the sign has been changed) is called an energy function \cite{KMN}.
\end{remark}

According to \eqref{eq:2024july3_10}, we define the partition function with
fixed $\varepsilon_1$-values by
\begin{equation}\label{eq:2024oct18_1}
Z_{L,s}^{(r)}(q)=
\sum_{\eta \in \mathcal{H}^{(r)} (L,s)} q^{E(\eta)},
\end{equation}
\red{for $L >s \geq r \geq 0$, and let $Z_{0,0}^{(0)}(q) =1$.}

We show that it
is consistent with notation in \eqref{eq:2024nov22_1}.

\begin{proposition}\label{prop:10}
\begin{equation}\label{eq:2024july3_6}
Z_{L,s}^{(r)}(q) = 
{ L \brack s -r}- { L \brack s-r-1}. 
\end{equation}
\end{proposition}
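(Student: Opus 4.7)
The plan is to reduce the problem to the case $r=0$ using the bijection of Proposition~\ref{prop:5}, and then to evaluate $Z_{L,s}^{(0)}(q)$ by combining the disjoint union \eqref{eq:2024july3_10} with Proposition~\ref{prop:2024nov27_1}. I will work in the range $0 \leq r \leq s \leq \lfloor L/2 \rfloor$, which covers the cases actually needed in Section~\ref{sec3} (there one takes $L=2n+a$ and $s=n$ with $a\in\{0,1\}$).

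First I would apply the inverse bijection $\tilde{e}_1$ of Proposition~\ref{prop:5}, which sends $\mathcal{H}^{(r)}(L,s)$ to $\mathcal{H}^{(r-1)}(L,s-1)$ while preserving $E$. Iterating this map $r$ times (its hypothesis $s' \leq \lfloor L/2 \rfloor$ is preserved at each stage since $s'$ decreases monotonically from $s$ down to $s-r+1$) yields the energy-preserving identity
\begin{equation*}
Z_{L,s}^{(r)}(q) = Z_{L,s-r}^{(0)}(q).
\end{equation*}

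Next, the disjoint union \eqref{eq:2024july3_10} together with Proposition~\ref{prop:2024nov27_1} gives
\begin{equation*}
{L \brack s} \;=\; Z_{L,s}(q) \;=\; \sum_{r=0}^{s} Z_{L,s}^{(r)}(q) \;=\; \sum_{t=0}^{s} Z_{L,t}^{(0)}(q),
\end{equation*}
where the last equality uses the identity above after the reindexing $t=s-r$. Subtracting the same relation with $s$ replaced by $s-1$ (telescoping, with the convention ${L \brack -1}=0$) isolates
\begin{equation*}
Z_{L,s}^{(0)}(q) \;=\; {L \brack s} - {L \brack s-1}.
\end{equation*}
Substituting this back into the formula from Step~1 delivers the claim.

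The main obstacle I anticipate is bookkeeping the range of validity of Proposition~\ref{prop:5} throughout the iteration (together with the boundary cases $r=0$ or $s-r=0$, where the claim must be checked directly by noting that $\mathcal{H}^{(0)}(L,0)=\{(0,\dots,0)\}$ contributes $1$). Once that is handled, the combinatorial content reduces to a clean telescoping. A potential extension to $s>\lfloor L/2 \rfloor$ would require a separate argument, but such parameters are not required for Proposition~\ref{prop:2024oct16_2} and so I would not pursue them.
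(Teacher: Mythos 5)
Your reduction to $r=0$ by iterating $\tilde{e}_1$ is exactly the paper's first step, but your evaluation of $Z^{(0)}_{L,s}(q)$ takes a genuinely different route. The paper re-runs the last-zero classification from the proof of Proposition \ref{prop:2024nov27_1} to show that $Z^{(0)}_{L,s}(q)$ satisfies the recursion \eqref{eq:may13_2} and the boundary conditions \eqref{eq:may13_2xx} (including the subtle point that $\mathcal{H}^{+}_{(0)}(2s,s)=\emptyset$), and then concludes $Z^{(0)}_{L,s}=X^{+}_{L,s}$ by the uniqueness statement behind Proposition \ref{prop:6}. You instead combine the decomposition $Z_{L,s}=\sum_{r=0}^{s}Z^{(r)}_{L,s}$ with the already-established evaluation $Z_{L,s}={L\brack s}$ and the shift $Z^{(r)}_{L,s}=Z^{(0)}_{L,s-r}$ to obtain $\sum_{t=0}^{s}Z^{(0)}_{L,t}={L\brack s}$, and then telescope. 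This is correct and arguably cleaner: it requires no second verification of the recursion and makes no use of $X^{+}_{L,s}$, at the cost of invoking Proposition \ref{prop:5} across the full range of $\varepsilon_1$-values rather than along a single chain. Your explicit restriction to $0\le r\le s\le\lfloor L/2\rfloor$ is also a point in your favour: the identity genuinely fails outside it (for instance $Z^{(0)}_{2,2}(q)=0$ while ${2\brack 2}-{2\brack 1}=-q$), the paper's own proof carries the same implicit restriction through Propositions \ref{prop:5} and \ref{prop:6}, and the range you retain is precisely what Proposition \ref{prop:2024oct16_2} needs.
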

\begin{proof}
By Proposition \ref{prop:5}, we have $Z_{L,s}^{(r)}(q) = Z_{L,s-r}^{(0)}(q)$ for $1 \leq r \leq s$.
Thus it suffices to establish this formula in the case of $r=0$.
Then by Proposition \ref{prop:6}, it suffices to show that $Z^{(0)}_{L,s}(q)  =X^{+}_{L,s}(q)$.
\red{For $s \geq 1$, we have $\mathcal{H}^{(0)} (2s-1,s) = \emptyset$, because the condition
$S_{2s-1}(\eta) = -1$ holds for every $\eta \in \mathcal{H}(2s-1,s)$.
Recalling our convention $Z_{0,0}^{(0)}(q) =1$ as well,  we see that}
$Z^{(0)}_{L,s}(q)$ satisfies the boundary condition \eqref{eq:may13_2xx}
and the remaining task for us is to verify the recursion relation \eqref{eq:may13_2}.

In what follows, let $\mathcal{H}^{+} (L,s) = \mathcal{H}^{(0)} (L,s)$.
By classifying the elements of $\mathcal{H}^{+} (L,s)$ by the positions $L-k$ of 
the last zero, we have
\begin{align*}
\mathcal{H}^{+} (L,s) &= \coprod_{k = 0}^s \mathcal{H}^{+}_{(k)} (L,s),\\
\mathcal{H}^{+}_{(k)} (L,s) &:= \{ \eta \in \mathcal{H}^{+} (L,s) \mid
\max \{ \alpha \mid \eta_\alpha=0 \} = L-k \}. 
\end{align*}
According to this disjoint union, the partition function takes the form
\begin{equation*}
Z^{(0)}_{L,s}(q) =
\sum_{\eta \in \mathcal{H}^{+}_{(0)}  (L,s)} q^{E(\eta)} +
\sum_{k=1}^s \sum_{\eta \in \mathcal{H}^{+}_{(k)}  (L,s)} q^{E(\eta)}.
\end{equation*}
By using the same argument in the proof of Proposition \ref{prop:2024nov27_1},
we see that the right hand side of this expression
equals to the right hand side of \eqref{eq:may13_2} with $G_{L,s}(q) =Z^{(0)}_{L,s}(q)$.
The only subtlety may occur when $L=2s$, but in this case we always have $\eta_L = 1$ and hence 
$\mathcal{H}^{+}_{(0)}  (2s,s) = \emptyset$, which is consistent with
$Z^{(0)}_{2s-1,s}(q)=0$.
\end{proof}
{
\begin{remark}
Proposition \ref{prop:10} in the case of $r=0$ can also be confirmed by
verifying that both sides of equation \eqref{eq:2024july3_6} are equivalent to
the same Kostka polynomial $K_{\lambda, \mu}(q)$ with
$\lambda =2^s 1^{L-2s}$ and $\mu =1^L$ (Appendix \ref{secA}).
\end{remark}
}

\begin{example}
Let $L=5, s=2$.
Then we have
\begin{align*}
\mathcal{H}^{(0)} (5,2) &= \{ 00110, 00011, 01010, 01001, 00101 \}, \\
\mathcal{H}^{(1)} (5,2) &= \{ 01100, 10100, 10010, 10001 \}, \\
\mathcal{H}^{(2)} (5,2) &= \{ 11000 \}. 
\end{align*}
One finds that the corresponding partition functions indeed take the forms
\begin{align*}
Z^{(0)}_{5,2}(q) &={ 5 \brack 2}- { 5 \brack 1}  = q^2 + q^3 + q^4 + q^5 + q^6, \\
Z^{(1)}_{5,2}(q) &={ 5 \brack 1}- { 5 \brack 0}  = q + q^2 + q^3 + q^4, \\
Z^{(2)}_{5,2}(q) &={ 5 \brack 0}- { 5 \brack -1}  = 1. 
\end{align*}

\end{example}

\section{An energy preserving bijection between restricted partitions and bit sequences}\label{sec5}
In this section we define an energy preserving bijection between the sets of restricted partitions 
and bit sequences.
Together with the result in Sect.~\ref{sec4}, this enables us to represent the function
$Z_{2n+a,n}^{(r)}(q)$ in \eqref{eq:2024nov22_1} as a statistical configuration sum over
a subset of the restricted partitions.

We let $\mathscr{P}^{(n)}_l (\subset \mathscr{P})$ denote 
the integer partitions having at most $n$ parts 
and each part being less than or equal to $l$.
The number of cells contained in $\lambda$ is denoted by $|\lambda|$,
and called the {\em energy} of $\lambda$.
Recall that the energy of a bit sequence was defined by \eqref{eq:2024july3_9}.
The following result is a slight modification of those in the appendix of
the author's former work \cite{Takagi1}.

\begin{proposition}\label{prop:11}
There is an energy preserving bijection between $\mathcal{H}(L,s)$ and $\mathscr{P}^{(s)}_{L-s}$.
\end{proposition}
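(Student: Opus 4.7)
The plan is to construct an energy-preserving bijection $\phi \colon \mathcal{H}(L,s) \to \mathscr{P}^{(s)}_{L-s}$ by induction on $L$, using the disjoint-union decomposition $\mathcal{H}(L,s) = \coprod_{k=0}^{s} \mathcal{H}_{(k)}(L,s)$ from the proof of Proposition~\ref{prop:2024nov27_1}. That decomposition already carries a clean recursion on the energy side: for $\eta \in \mathcal{H}_{(0)}(L,s)$ removing the trailing $0$ preserves $E$, while for $\eta \in \mathcal{H}_{(k)}(L,s)$ with $k \geq 1$ removing the trailing $0 \cdot 1^{k}$ reduces $E$ by exactly $L-k$. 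So the job is to mirror this on the partition side.

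The base cases are $\mathcal{H}(0,0)$ and $\mathcal{H}(L,L) = \{1^{L}\}$, both sending the unique element to the empty partition. For the inductive step, I would set $\phi(\eta) := \phi(\eta')$ (composed with the natural inclusion $\mathscr{P}^{(s)}_{L-s-1} \hookrightarrow \mathscr{P}^{(s)}_{L-s}$) when $\eta$ ends in a single $0$; when $\eta$ ends in $0 \cdot 1^{k}$ with $k \geq 1$, I would define $\phi(\eta)$ as a canonical augmentation of $\phi(\eta')$ (which lies in $\mathscr{P}^{(s-k)}_{L-s-1}$ by induction) that adds cells of total area $L-k$, arranged so as to fit inside the $s \times (L-s)$ box and be recoverable from the partition alone.

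Verification of the proposition then reduces to three inductive checks: that $\phi(\eta)$ lies in $\mathscr{P}^{(s)}_{L-s}$, that $|\phi(\eta)| = E(\eta)$, and that $\phi$ is a bijection with inverse obtained by recognizing and stripping the freshly-added cells. The first two follow immediately from the construction and the recursion already proved on the bit-sequence side.

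The hard part, and the main obstacle, is specifying the canonical augmentation unambiguously: in general many partitions of the correct size extend $\phi(\eta')$ inside the larger box, and the rule must be chosen uniformly in $k$ and consistently with the "ends in $0$" case so that the inverse procedure (peel off the trailing block of fresh cells, recurse) is well-defined. The precise choice is the one given in the appendix of \cite{Takagi1}, and the phrase "slight modification" in the preamble indicates that only minor adjustments aligning conventions with the present definition~\eqref{eq:2024july3_9} of $E$ and the present box dimensions are required.
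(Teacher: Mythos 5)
There is a genuine gap: the map itself is never defined. You correctly set up the induction on $L$ via the decomposition $\mathcal{H}(L,s)=\coprod_{k=0}^{s}\mathcal{H}_{(k)}(L,s)$ and correctly identify that everything hinges on choosing the ``canonical augmentation'' that adds $L-k$ cells when $\eta$ ends in $0\cdot 1^{k}$ --- but then you declare this the main obstacle and defer to the appendix of \cite{Takagi1} rather than supplying it. Since the entire content of the proposition is the existence of a specific energy-preserving bijection, a proof that leaves the map unspecified at exactly this point is not a proof. For the record, the missing rule is forced by the bookkeeping you already did: $\phi(\eta')$ lies in $\mathscr{P}^{(s-k)}_{L-s-1}$, and the augmentation is to wrap around it a new principal hook with arm length $L-s-1$ and leg length $s-k$, which has exactly $(L-s-1)+(s-k)+1=L-k$ cells; validity in the Frobenius sense holds because the outer hook of $\phi(\eta')$ has arm $\leq L-s-2$ and leg $\leq s-k-1$, and the inverse is well defined because a partition in $\mathscr{P}^{(s)}_{L-s}$ either has outer arm length exactly $L-s-1$ (peel the hook, read off $k=s-y_1$) or already lies in $\mathscr{P}^{(s)}_{L-s-1}$ (strip a trailing $0$). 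With that rule stated and these checks carried out, your induction closes.

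The paper avoids the recursion entirely: it writes $\eta$ in block form $1^{\alpha_1}0^{\beta_1}\cdots 1^{\alpha_{d+1}}0^{\beta_{d+1}}$ and directly defines $\Phi(\eta)$ by the Frobenius representation $x_i=\sum_{j=1}^{d+1-i}\beta_j-1$, $y_i=\sum_{j=1}^{d+1-i}\alpha_j$, with an explicit inverse and a one-line verification of $E(\eta)=|\Phi(\eta)|$ by summing the positions where $0$ turns into $1$. Your inductive scheme, once completed as above, is precisely the one-hook-at-a-time unwinding of that closed formula; the closed form is shorter and makes the energy computation transparent, whereas the recursion has the advantage of matching the proof of Proposition~\ref{prop:2024nov27_1} step for step.
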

\begin{proof}
We construct the bijection explicitly.
First we define the map $\Phi : \mathcal{H}(L,s) \rightarrow \mathscr{P}^{(s)}_{L-s}$
as follows.
For 
\begin{equation}\label{eq:2024nov27_2}
\eta = \eta_{\emptyset} :=(\underbrace{1,\dots,1}_{s}, \underbrace{0, \dots, 0}_{L-s}),
\end{equation}
we set $\Phi(\eta) = \emptyset$.
Otherwise, every bit sequence $\eta \in \mathcal{H}(L,s)$ has the expression
\begin{equation}\label{eq:2024july5_1}
\eta =(\underbrace{1,\dots,1}_{\alpha_1}, \underbrace{0, \dots, 0}_{\beta_1},
\underbrace{1,\dots,1}_{\alpha_2}, \underbrace{0, \dots, 0}_{\beta_2},\dots \dots,\underbrace{1,\dots,1}_{\alpha_{d+1}}, \underbrace{0, \dots, 0}_{\beta_{d+1}}),
\end{equation}
where $d \geq 1, \sum_{k=1}^{d+1} \alpha_k = s$, and $\sum_{k=1}^{d+1} \beta_k = L-s$. 
Here $\alpha_1$ and $\beta_{d+1}$ are non-negative, and the other 
$\alpha_k$'s and  $\beta_k$'s are positive.
For $1 \leq i \leq d$, let
\begin{equation}\label{eq:2025jan10_1}
x_i = \sum_{j=1}^{d+1-i} \beta_j -1, \quad y_i = \sum_{j=1}^{d+1-i} \alpha_j.
\end{equation}
Then we have 
\begin{equation}\label{eq:2024nov27_3}
L-s-1 \geq x_1 > \dots > x_d \geq 0 \quad
\mbox{and} \quad s-1 \geq y_1 > \dots > y_d \geq 0.
\end{equation}
Let $\Phi (\eta)$ be the partition that has the Frobenius representation 
$F(\Phi (\eta)) = (x_1,\dots,x_d \mid y_1, \dots, y_d)$.
Then clearly one has $\Phi (\eta) \in \mathscr{P}^{(s)}_{L-s}$.

The inverse map can be easily constructed.
For $\lambda = \emptyset \in \mathscr{P}^{(s)}_{L-s}$, we define $\Phi^{-1}(\emptyset)$ 
to be $\eta_{\emptyset}$ given by \eqref{eq:2024nov27_2}.
Otherwise every $\lambda \in \mathscr{P}^{(s)}_{L-s}$ admits a Frobenius representation
of the form 
\begin{equation}\label{eq:2024dec5_1}
F(\lambda) = (x_1,\dots,x_d \mid y_1, \dots, y_d),
\end{equation}
satisfying the condition
\eqref{eq:2024nov27_3}, where $d = d(\lambda) \geq 1$ is the side of the Durfee square of $\lambda$.
For this $\lambda$ we set
\begin{equation}\label{eq:2024oct16_1}
\Phi^{-1}(\lambda)  =(\underbrace{1,\dots,1}_{y_d}, \underbrace{0, \dots, 0}_{x_d+1},
\underbrace{1,\dots,1}_{y_{d-1}-y_d}, \underbrace{0, \dots, 0}_{x_{d-1}-x_d},\dots \dots,\underbrace{1,\dots,1}_{y_{1}-y_2}, \underbrace{0, \dots, 0}_{x_{1}-x_2}
\underbrace{1,\dots,1}_{s-y_1}, \underbrace{0, \dots, 0}_{L-s-x_1-1}).
\end{equation}
Then we see that $\Phi^{-1}(\lambda) \in \mathcal{H}(L,s)$, and
that it reproduces \eqref{eq:2024july5_1} by plugging the $x_i$'s and $y_i$'s in \eqref{eq:2025jan10_1}
into \eqref{eq:2024oct16_1}.
Furthermore, by looking over this bit sequence from the left to the right,
and summing up the positions at which the $0$s turn into the $1$s, 
one can verify the energy preservation
\begin{equation*}
E(\Phi^{-1}(\lambda)) = |\lambda| = \sum_{i=1}^d (x_i + y_i) + d.
\end{equation*}
The proof is completed.
\end{proof}

\begin{example}
Let $L=15, s=7$ and $\eta = 101001100011010$.
Then we have $\alpha_1 = \alpha_2= 1, \alpha_3 = \alpha_4 = 2, \beta_1 = 1, \beta_2=2, \beta_3=3$,
and $\beta_4=1$.
This implies that
\begin{align*}
x_1 &= \beta_1 + \beta_2 + \beta_3 + \beta_4 - 1 = 6, &\quad
y_1 &= \alpha_1 + \alpha_2 + \alpha_3 + \alpha_4 = 6, \\
x_2 &= \beta_1 + \beta_2 + \beta_3 - 1 = 5, &\quad
y_2 &= \alpha_1 + \alpha_2 + \alpha_3 = 4, \\
x_3 &= \beta_1 + \beta_2 - 1 = 2, &\quad
y_3 &= \alpha_1 + \alpha_2  = 2, \\
x_4 &= \beta_1  - 1 = 0, &\quad
y_4 &= \alpha_1   = 1.
\end{align*}
Therefore we have $F(\Phi (\eta)) = (6,5,2,0 \mid 6,4,2,1)$ and hence
$\Phi (\eta) = (7,7,5,4,4,2,1)$.
(See Fig.~\ref{fig5}.)
The energy preservation is confirmed as $E(\eta) = 2+5+10+13 = 30 = 7+7+5+4+4+2+1 =|\Phi (\eta)|$.

\end{example}

Recall the definitions of the path-encoding \eqref{eq:2024nov27_4}
and the function $\varepsilon_1$ \eqref{eq:2024nov27_5}.
In the bit sequence \eqref{eq:2024oct16_1},
find the positions at which the $1$s turn into the $0$s.
By noting that they are the positions at which the path $S(\Phi^{-1}(\lambda))$ 
takes on its local minima (See Fig.~\ref{fig:pathenc2} for an example),
we can obtain the following expression
\begin{equation*}
\varepsilon_1 \circ \Phi^{-1}(\lambda) = \max_{0 \leq i \leq d(\lambda)} (y_i -x_{i+1}) -1
=: \mathcal{E}_1 (\lambda; s),
\end{equation*}
where we set $y_0 = s$ and $x_{d(\lambda)+1}=-1$.
Here we also introduced the function $\mathcal{E}_1$ as:
\begin{definition}
Define the function $\mathcal{E}_1$
from $\mathscr{P}$ to $\mathbb{Z}_{\geq 0}$ by
\begin{equation}\label{eq:2024nov27_6}
\mathcal{E}_1 (\lambda; n) =  \max_{0 \leq i \leq d} (y_i -x_{i+1}) -1,
\end{equation}
where $n \in \mathbb{Z}_{\geq 0}$ is a fixed parameter,
$( x_i, y_i )_{1 \leq i \leq d}$ are the entries of the Frobenius representation \eqref{eq:2024dec5_1},
$y_0 = n$ and $x_{d+1}=-1$.
\end{definition}
By using this function we can directly evaluate the values of $\varepsilon_1 \circ \Phi^{-1}(\lambda)$ 
from the Frobenius representation \eqref{eq:2024dec5_1} without 
mapping $\lambda$ to the bit sequence $\Phi^{-1}(\lambda)$.


By Propositions \ref{prop:10} and \ref{prop:11},
we find that polynomial $Z_{2n+a,n}^{(r)}(q)$ in \eqref{eq:2024nov22_1}
can be viewed as a generating function enumerating
some subset of $\mathscr{P}^{(n)}_{n+a}$.
More explicitly, 
by classifying the elements of $\mathscr{P}^{(n)}_{n+a}$ by their $\mathcal{E}_1$-values,
we have
\begin{align}
\mathscr{P}^{(n)}_{n+a} &= \coprod_{r = 0}^n \mathscr{P}^{(n;r)}_{n+a}, \nonumber \\
\mathscr{P}^{(n;r)}_{n+a}  &:= \{ \lambda \in \mathscr{P}^{(n)}_{n+a} \mid 
\mathcal{E}_1 (\lambda; n) = r \}, \label{eq:2024dec6_2}
\end{align}
and by which we obtain the expression
\begin{equation}\label{eq:2024dec6_5}
Z_{2n+a,n}^{(r)}(q)=
\sum_{\eta \in \mathcal{H}^{(r)} (2n+a,n)} q^{E(\eta)}
 = \sum_{\lambda \in \mathscr{P}^{(n;r)}_{n+a} }
q^{|\lambda|}.
\end{equation}

\begin{corollary}
\par\noindent
\begin{enumerate}
\item $Z_{2n+a,n}^{(r)}(q) $ in \eqref{eq:2024nov22_1} enumerates 
the subset of $\mathscr{P}^{(n)}_{n+a}$ specified by $\mathcal{E}_1 (\bullet; n) = r$.
\item $B_n(q)$ in \eqref{eq:2024july2_1} enumerates 
the subset of $\mathscr{P}^{(n)}_{n+1}$ specified by even $\mathcal{E}_1 (\bullet; n)$.
\end{enumerate}
\end{corollary}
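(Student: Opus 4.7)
The plan is that both items of the corollary are light consequences of the machinery just assembled; essentially nothing new is needed beyond bookkeeping.

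For item~1, my approach is to observe that the energy-preserving bijection $\Phi$ of Proposition~\ref{prop:11} restricts to a bijection between $\mathcal{H}^{(r)}(2n+a,n)$ and $\mathscr{P}^{(n;r)}_{n+a}$, because the identity $\varepsilon_1\circ\Phi^{-1}(\lambda)=\mathcal{E}_1(\lambda;n)$ established just after Proposition~\ref{prop:11} means that the level sets of $\varepsilon_1$ on the bit-sequence side correspond precisely to the level sets of $\mathcal{E}_1(\bullet;n)$ on the partition side. Combining this with Proposition~\ref{prop:10}, which identifies the statistical configuration sum $Z_{L,s}^{(r)}(q)$ with the Gaussian subtraction formula \eqref{eq:2024nov22_1}, yields the chain of equalities already displayed as \eqref{eq:2024dec6_5}. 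So item~1 amounts to reading \eqref{eq:2024dec6_5} and \eqref{eq:2024dec6_2} together.

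For item~2, the strategy is to express $B_n(q)$ in \eqref{eq:2024july2_1} as a sum of the polynomials $Z_{2n+1,n}^{(r)}(q)$ over even $r$, then invoke item~1. By pairing consecutive terms of the alternating sum,
\begin{equation*}
B_n(q)=\sum_{r=0}^{n}(-1)^r{2n+1\brack n-r}
=\sum_{\substack{0\le r\le n\\ r\ \mathrm{even}}}\left({2n+1\brack n-r}-{2n+1\brack n-r-1}\right)
=\sum_{\substack{0\le r\le n\\ r\ \mathrm{even}}}Z_{2n+1,n}^{(r)}(q),
\end{equation*}
where one checks that the grouping closes correctly in both parities of $n$ (when $n$ is even, the terminal term $\binom{2n+1}{0}$ pairs with $\binom{2n+1}{-1}=0$). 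Applying item~1 term by term then identifies the right-hand side with $\sum_{r\ \mathrm{even}}\sum_{\lambda\in\mathscr{P}^{(n;r)}_{n+1}}q^{|\lambda|}$, which is exactly the generating function of partitions in $\mathscr{P}^{(n)}_{n+1}$ with even $\mathcal{E}_1(\bullet;n)$.

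There is no real obstacle here. The only small point requiring care is checking the parity bookkeeping in the telescoping step — specifically, making sure that the last pair is handled consistently for both even and odd $n$ so that the final sum is genuinely over all even $r$ in $[0,n]$. Everything else is a direct citation of Propositions~\ref{prop:10} and~\ref{prop:11} and the $\mathcal{E}_1$-identity derived between them.
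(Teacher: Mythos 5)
Your proposal is correct and follows essentially the same route as the paper: item 1 is exactly the chain $Z^{(r)}_{2n+a,n}(q)=\sum_{\eta\in\mathcal{H}^{(r)}(2n+a,n)}q^{E(\eta)}=\sum_{\lambda\in\mathscr{P}^{(n;r)}_{n+a}}q^{|\lambda|}$ obtained from Propositions \ref{prop:10} and \ref{prop:11} together with the identity $\varepsilon_1\circ\Phi^{-1}(\lambda)=\mathcal{E}_1(\lambda;n)$, and item 2 is the same telescoping of the alternating sum into $\sum_{r\ \mathrm{even}}Z^{(r)}_{2n+1,n}(q)$ that the paper uses when deriving \eqref{eq:2024dec25_1}. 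Your parity check at the boundary term is correct.
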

This provides an answer to the problem posed by
Andrews and Newman \cite[p.~9, Question II (3)]{AN2020}.

\section{An energy preserving bijection between partitions and pairs of partitions}\label{sec6}
Let $\mathscr{P}^{(n)} (\subset \mathscr{P})$ denote the integer partitions having at most $n$ parts, and let
$\mathscr{P}_l (\subset \mathscr{P})$ denote the integer partitions with each part
being less than or equal to $l$.

\begin{proposition}\label{prop:12}
There is an energy preserving bijection between
 $\mathscr{P}^{(n)}$ and $\mathscr{P}^{(n)}_{n+a} \times 
(\mathscr{P}_{2n+a} \setminus \mathscr{P}_{n+a}) $.
\end{proposition}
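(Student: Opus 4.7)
The plan is to construct an explicit energy-preserving bijection $\Psi\colon\mathscr{P}^{(n)}\to\mathscr{P}^{(n)}_{n+a}\times(\mathscr{P}_{2n+a}\setminus\mathscr{P}_{n+a})$ by iterating the rim-hook stripping procedure already used in step~2 of the definition of $f^{(a)}$ in Section~\ref{sec2}. At the generating-function level the identity
\begin{equation*}
\frac{1}{(q;q)_n}\;=\;{2n+a\brack n}\,\frac{(q;q)_{n+a}}{(q;q)_{2n+a}}
\end{equation*}
confirms the cardinalities graded by size, provided one reads the second factor as $\prod_{k=n+a+1}^{2n+a}(1-q^k)^{-1}$, i.e.\ the generating function of partitions whose parts all lie in $\{n+a+1,\ldots,2n+a\}$; this is the intended meaning of $\mathscr{P}_{2n+a}\setminus\mathscr{P}_{n+a}$ in the statement.

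Given $\lambda\in\mathscr{P}^{(n)}$, I would iteratively strip from the current diagram the longest rightmost rim hook of arm length exactly $n+a$. Because the diagram has at most $n$ rows throughout, each stripped hook has leg length $\beta\in\{0,\ldots,n-1\}$ and therefore size $n+a+\beta+1\in\{n+a+1,\ldots,2n+a\}$. The procedure halts once the residual diagram has at most $n+a$ columns; that residual is declared to be $\mu\in\mathscr{P}^{(n)}_{n+a}$, and the multiset of stripped sizes, arranged weakly decreasingly, is declared to be $\nu$. Energy preservation $|\lambda|=|\mu|+|\nu|$ is automatic since each strip removes exactly as many cells as its own size.

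The inverse map reads $(\mu,\nu)$ and reinserts the parts of $\nu$ into $\mu$ in decreasing order; at each stage the next part $\nu_i$ prescribes the size, hence also the leg length, of an arm-$(n+a)$ rim hook to be attached at the canonical rightmost place along the rim. Verification then consists of showing that (i)~at each stripping step the longest rightmost arm-$(n+a)$ rim hook exists uniquely, (ii)~the multiset of sizes arranged weakly decreasingly genuinely forms a partition with parts in the required range, and (iii)~the insertion procedure is well defined, preserves the constraint of at most $n$ parts, and is mutually inverse to the stripping.

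Step~(iii) will be the main obstacle: as small examples show, the successive stripped sizes need not appear in monotone order along the forward procedure, so I must show that sorting them and then reinserting in decreasing order yields precisely the original $\lambda$. A useful lens is the path-encoding $S$ of Section~\ref{sec4} together with the bijection $\Phi$ of Proposition~\ref{prop:11}, extended to accommodate partitions with unbounded largest part by allowing bit sequences of arbitrary length. Under this correspondence an arm-$(n+a)$ rim-hook insertion or removal translates into a local surgery on the associated bit sequence at a prescribed site, and well-definedness, uniqueness, and mutual inversion all reduce to a short case analysis on these local moves.
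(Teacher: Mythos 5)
Your construction is the same as the paper's: iteratively strip the longest rightmost rim hook of arm length $n+a$ until at most $n+a$ columns remain, record the residual diagram and the multiset of stripped lengths, and invert by re-attaching hooks of prescribed length and leg length; your reading of $\mathscr{P}_{2n+a}\setminus\mathscr{P}_{n+a}$ as the partitions with all parts in $\{n+a+1,\dots,2n+a\}$ is also the intended one. But the proposal stops exactly where the work begins, and the one substantive claim you make about that remaining work is false. The successive stripped lengths \emph{are} weakly increasing along the forward procedure: if the current diagram is $\nu$ with largest part $\nu_1$, the stripped hook has leg length $k-1$ with $k=\#\{i:\nu_i\ge\nu_1-n-a\}$, and stripping replaces $\nu$ by $(\nu_2-1,\dots,\nu_k-1,\,\nu_1-n-a-1,\,\nu_{k+1},\dots)$; each of the $k$ displayed parts is $\ge\nu_1-n-a-1$, which is at least the new largest part minus $n+a$, so the next value of $k$ is at least $k$. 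This monotonicity is not a nuisance to be sorted away; it is the key lemma. It is what guarantees that the recorded lengths, read in reverse order of stripping, already form a partition with parts in $\{n+a+1,\dots,2n+a\}$, and it is precisely what the paper uses to make the insertion map well defined and inverse to the stripping.

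Concretely, your step (iii) is a genuine gap. You must specify where each re-attached hook goes and prove that the result is a valid Ferrers diagram with at most $n$ rows and that the two procedures undo each other; ``the canonical rightmost place along the rim'' is not a definition, and the proposed reduction to local surgery on bit sequences via an unbounded extension of the map $\Phi$ is never set up (Proposition \ref{prop:11} as stated bounds the largest part by $L-s$, and you do not explain how rim-hook insertion acts on the extended encoding). The paper closes this gap directly: writing $\mu_i=n+a+l_i$ with $n\ge l_1\ge\cdots\ge l_N\ge1$, it attaches the $i$th hook, of leg length $l_i-1$, with its lower-leftmost cell at $(l_i,\lambda^{(i-1)}_{l_i}+1)$, and uses $l_{i-1}\ge l_i$ to show by induction that each hook's upper-rightmost cell lies in the first row strictly to the right of the previous diagram; validity of the diagram, the bound of $n$ on the number of rows, and mutual inversion all follow from this. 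Without the monotonicity lemma and an explicit anchoring rule of this kind, the bijection is asserted rather than proved.
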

\begin{proof}
We construct such a bijection explicitly.
First we define the map $\Psi^{(a)} : \mathscr{P}^{(n)} \rightarrow \mathscr{P}^{(n)}_{n+a} \times (\mathscr{P}_{2n+a} \setminus \mathscr{P}_{n+a})$
as follows. Let $\nu \in \mathscr{P}^{(n)}$:
\begin{enumerate}
\item 
If $\nu$ has less than $n+a+1$ columns, then let $(\lambda, \mu) = (\nu, \emptyset)$.
Otherwise, let $\nu^{(0)} = \nu$.
\item
For $i \geq 1$, we define the sequence of partitions $\nu^{(i)}$ and the sequence of
rim hooks $\xi^{(i)}$ recursively by the following way.
If $\nu^{(i-1)}$ has more than $n+a$ columns, let $\xi^{(i)}$ be the longest of the
rightmost rim hooks
with arm length $n+a$ lying along the rim of $\nu^{(i-1)}$, and
let $\nu^{(i)} =\nu^{(i-1)} \setminus \xi^{(i)}$.
Repeat this procedure for $i=1,2,\dots$ until we have some $\nu^{(N)}$ that 
has less than $n+a+1$ columns.
Let $\lambda = \nu^{(N)}$ and $\mu = (|\xi^{(N)}|, \dots, |\xi^{(1)}|)$.
\item
Let $\Psi^{(a)} (\nu) = (\lambda, \mu)$.
\end{enumerate}
By definition we have $|\nu| = |\lambda| + |\mu|$, so $\Psi^{(a)}$ is an energy preserving map.
Note that, if we write $|\xi^{(i)}|=n+a+k_i$,
then the leg length of $\xi^{(i)}$ is $k_i-1$ and
we have $1 \leq k_1 \leq \dots \leq k_N \leq n$.
Thus we see that the $(\lambda, \mu)$ obtained here is
an element of $\mathscr{P}^{(n)}_{n+a} \times 
(\mathscr{P}_{2n+a} \setminus \mathscr{P}_{n+a})$.

Next we define the map $\Xi^{(a)} : \mathscr{P}^{(n)}_{n+a} \times (\mathscr{P}_{2n+a} \setminus \mathscr{P}_{n+a}) \rightarrow \mathscr{P}^{(n)} $
as follows. 
Let $(\lambda, \mu) \in \mathscr{P}^{(n)}_{n+a} \times 
(\mathscr{P}_{2n+a} \setminus \mathscr{P}_{n+a})$.
If $\mu = (\mu_1, \dots, \mu_N)$, then we can write $\mu_i = n+a+l_i$ where
the condition $n \geq l_1 \geq \dots \geq l_N \geq 1$ is satisfied. 
In the following, we interpret the coordinates of cells in such a way that
the coordinate $(1,1)$ is always at the position of the upper-leftmost cell of $\lambda$.

\begin{enumerate}
\item If $\mu = \emptyset$, then let $\nu = \lambda$.
Otherwise, we let $\lambda^{(0)} = \lambda$.
\item For $i=1, \dots, N$, we construct a partition $\lambda^{(i)}$ from the partition $\lambda^{(i-1)}$
by sticking a rim hook with length $\mu_i$ and leg length $l_i-1$
(which we call $\eta^{(i)}$) to it.
It is done in such a way that
the lower-leftmost cell of $\eta^{(i)}$ is at the coordinate $(l_i, \lambda^{(i-1)}_{l_i}+1)$, where
we interpret $\lambda^{(i-1)}_{l_i} =0$ if the partition $\lambda^{(i-1)}$ has
less than $l_i$ parts \footnote{This can occur only when $i=1$.}. Let $\nu = \lambda^{(N)}$.
\item Let $\Xi^{(a)} (\lambda, \mu) = \nu$.
\end{enumerate}
Since the leg length of rim hook $\eta^{(i)}$ is $l_i -1$, its arm length is always $n+a$.
This enables us to verify (by induction on $i$) that the upper-rightmost cell of $\eta^{(i)}$ is always in the first row
and to the right of the upper-rightmost cell of $\lambda^{(i-1)}$,
because  by the condition $l_{i-1} \geq l_i$ the lower-leftmost cell of $\eta^{(i)}$ must be always
to the right of the lower-leftmost cell of $\eta^{(i-1)}$.
Thus we see that maps $\Psi^{(a)}$ and $\Xi^{(a)}$ are mutually inverse, and
the $\nu$ obtained in this procedure is indeed an element of $\mathscr{P}^{(n)}$.
Therefore $\Psi^{(a)} = (\Xi^{(a)})^{-1}$ is
an energy preserving bijection from
 $\mathscr{P}^{(n)}$ to $\mathscr{P}^{(n)}_{n+a} \times 
(\mathscr{P}_{2n+a} \setminus \mathscr{P}_{n+a}) $.
\end{proof}

\begin{remark}
For every $\mathscr{S} \subset \mathscr{P} \sqcup (\mathscr{P} \times \mathscr{P})$ and $M \in \mathbb{Z}_{\geq 0}$, 
let $[\mathscr{S}]_M$ denote the subset of $\mathscr{S}$ that consists of
the elements with energy $M$.
As a result of Proposition \ref{prop:12}, 
the map $\Psi^{(a)}$ restricted to $[\mathscr{P}^{(n)} ]_M$ gives a bijection between
$[\mathscr{P}^{(n)} ]_M$ and $[\mathscr{P}^{(n)}_{n+a} \times 
(\mathscr{P}_{2n+a} \setminus \mathscr{P}_{n+a}) ]_M$, and hence
such pairs of subsets share a common cardinality for all $M$.
This fact is directly verified by using their generating functions as
\begin{align*}
&\sum_{M \geq 0} \# [\mathscr{P}^{(n)}_{n+a} \times 
(\mathscr{P}_{2n+a} \setminus \mathscr{P}_{n+a})]_M  q^M \\
&=
\sum_{M_1 \geq 0} \# [\mathscr{P}^{(n)}_{n+a} ]_{M_1}  q^{M_1}
 \times
\sum_{M_2 \geq 0} \# [(\mathscr{P}_{2n+a} \setminus \mathscr{P}_{n+a})  ]_{M_2}   q^{M_2}
\\
&={ 2n+a \brack n} \cdot \frac{(q;q)_{n+a}}{(q;q)_{2n+a}} =\frac{1}{(q;q)_n} \\
&= \sum_{M \geq 0} \# [\mathscr{P}^{(n)} ]_M   q^M.
\end{align*}
\end{remark}

\begin{example}
Let $n=2, a=1$, and $M=10$.
Then the bijection $\Psi^{(1)} : [\mathscr{P}^{(2)} ]_{10} \rightarrow [\mathscr{P}^{(2)}_{3} \times 
(\mathscr{P}_{5} \setminus \mathscr{P}_{3}) ]_{10}$ is explicitly given by
\begin{align*}
\Psi^{(1)} ((10)) &= ((2),(4,4)), & \Psi^{(1)} ((9,1)) &= ((1,1),(4,4)), \\
\Psi^{(1)} ((8,2)) &= ((1),(5,4)), & \Psi^{(1)} ((7,3)) &= ((3,3),(4)), \\
\Psi^{(1)} ((6,4)) &= ((3,2),(5)), & \Psi^{(1)} ((5,5)) &= (\emptyset,(5,5)).
\end{align*}
See Fig.~\ref{fig6} for the corresponding combinatorial procedure.
\begin{figure}[h]
\centering
\includegraphics[height=3cm]{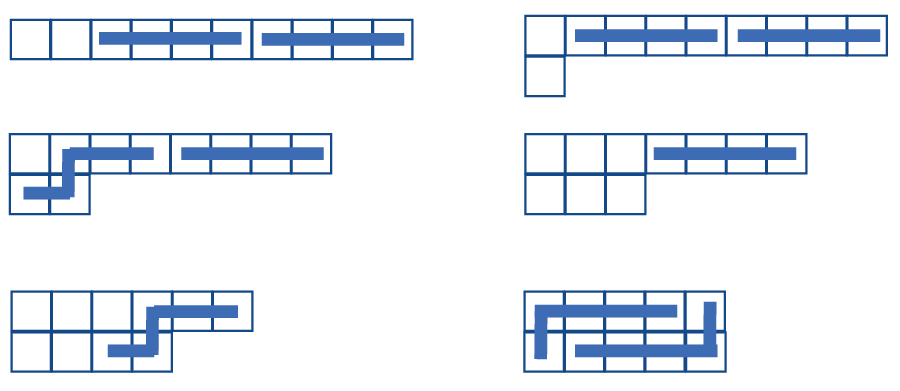}
\caption{A graphical representation for
the combinatorial procedure to realize the bijection $\Psi^{(1)} : [\mathscr{P}^{(2)} ]_{10} \rightarrow [\mathscr{P}^{(2)}_{3} \times 
(\mathscr{P}_{5} \setminus \mathscr{P}_{3}) ]_{10}$.}
\label{fig6}
\end{figure}

\end{example}

Having $\Psi^{(a)}$ in the proof of Proposition \ref{prop:12},
we let $\Psi^{(a)}_1$ and $\Psi^{(a)}_2$ be the pair of maps 
obtained from $\Psi^{(a)}$ by composing with a projection
into the first and the second component of $\mathscr{P}^{(n)}_{n+a} \times 
(\mathscr{P}_{2n+a} \setminus \mathscr{P}_{n+a})$,
respectively.

\begin{corollary}\label{coro:13}
There is an energy preserving bijection between
 $\mathscr{P}^{(n)} \times \mathscr{P}_{n+a}$ and $\mathscr{P}^{(n)}_{n+a} \times 
\mathscr{P}_{2n+a} $.
\end{corollary}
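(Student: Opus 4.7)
The plan is to obtain Corollary~\ref{coro:13} as an immediate consequence of Proposition~\ref{prop:12}, after establishing one elementary energy-preserving bijection that complements it.

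The extra bijection I would establish is the split of partitions in $\mathscr{P}_{2n+a}$ at the threshold $n+a$:
\[
\mathscr{P}_{2n+a} \;\longleftrightarrow\; \mathscr{P}_{n+a}\times\bigl(\mathscr{P}_{2n+a}\setminus\mathscr{P}_{n+a}\bigr),
\]
where (as already used in Proposition~\ref{prop:12} and confirmed by the generating-function identity in the preceding Remark) the second factor consists of partitions whose parts all lie in $\{n+a+1,\dots,2n+a\}$. Given $\beta\in\mathscr{P}_{2n+a}$, send it to $(\sigma,\mu)$, where $\sigma$ collects the parts of $\beta$ that are $\le n+a$ and $\mu$ collects those $>n+a$; the inverse is concatenate-and-sort. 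Since the two part-ranges are disjoint, the split is unambiguous, and manifestly $|\beta|=|\sigma|+|\mu|$, so the bijection preserves energy.

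With this in hand, I would form the chain
\begin{align*}
\mathscr{P}^{(n)}_{n+a}\times\mathscr{P}_{2n+a}
&\;\longleftrightarrow\;\mathscr{P}^{(n)}_{n+a}\times\mathscr{P}_{n+a}\times\bigl(\mathscr{P}_{2n+a}\setminus\mathscr{P}_{n+a}\bigr)\\
&\;\longleftrightarrow\;\mathscr{P}_{n+a}\times\Bigl(\mathscr{P}^{(n)}_{n+a}\times\bigl(\mathscr{P}_{2n+a}\setminus\mathscr{P}_{n+a}\bigr)\Bigr)\\
&\;\longleftrightarrow\;\mathscr{P}_{n+a}\times\mathscr{P}^{(n)}\;=\;\mathscr{P}^{(n)}\times\mathscr{P}_{n+a},
\end{align*}
where the first arrow applies the split from the previous paragraph to the second factor, the second is a permutation of factors, and the third is $(\Psi^{(a)})^{-1}$ from Proposition~\ref{prop:12} applied to the bracketed pair. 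Since each arrow preserves energy, the composition is the desired energy-preserving bijection.

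There is no real obstacle here: the construction is entirely formal once Proposition~\ref{prop:12} is available. The only point worth emphasising is that the threshold $n+a$ used in the split coincides exactly with the range $\{n+a+1,\dots,2n+a\}$ of rim-hook lengths allowed in $\Psi^{(a)}$, which is what makes the two bijections compose so cleanly and produces the product $\mathscr{P}^{(n)}_{n+a}\times\mathscr{P}_{2n+a}$ on the nose.
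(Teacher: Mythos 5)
Your proposal is correct and is essentially the paper's own argument: the paper defines $\Phi^{(a)}(\nu,\pi)=(\Psi^{(a)}_1(\nu),\,\Psi^{(a)}_2(\nu)\cup\pi)$ and declares it a bijection, which is exactly your composite map read in the other direction. The only difference is that you make explicit the merge/split step (justified by the disjointness of the part-ranges $\le n+a$ and $>n+a$) that the paper dismisses with ``clearly.''
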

\begin{proof}
We define the map $\Phi^{(a)}: \mathscr{P}^{(n)} \times \mathscr{P}_{n+a} \rightarrow
\mathscr{P}^{(n)}_{n+a} \times 
\mathscr{P}_{2n+a}$ as follows.
Given an element $(\nu, \pi) \in \mathscr{P}^{(n)} \times \mathscr{P}_{n+a}$,
let $\Phi^{(a)}(\nu,\pi) = (\Psi^{(a)}_1(\nu),  \Psi^{(a)}_2(\nu) \cup \pi )$
\footnote{For every pair of partitions $(\lambda, \mu)$, we define $\lambda \cup \mu$ to be
the partition whose parts are those of $\lambda$ and $\mu$, arranged in decrease order \cite{Macdonald}. In fact, this arrangement is unnecessary in the above case.}.
Clearly this $\Phi^{(a)}$ gives the desired bijection.
\end{proof}

\section{Proof of the main result}\label{sec7}
In this section, we give a proof of Theorem \ref{th:main} to establish the main
result of this paper.

Let $\lambda \in \mathscr{P}$, and recall the definitions of 
the Durfee square/rectangle $D_a(\lambda)$, 
its vertical side $n_a =n_a(\lambda)$,
the sub-diagram $A_a(\lambda)$,
and the residual diagram $R_a(\lambda)$ used to
evaluate its sqrank/rerank.
In $\lambda$,
the cells inside the first $n_a + a$ columns but not in $D_a(\lambda)$ constitute a sub-diagram 
of $\lambda$. Let $L_a(\lambda)$ denote this sub-diagram.
In other words, we have a unique decomposition
\begin{equation}\label{eq:2024dec6_1}
\lambda = D_a(\lambda) \uplus A_a(\lambda) \uplus L_a(\lambda),
\end{equation}
for every partition $\lambda$, where $\uplus$ denotes disjoint union \red{of sets}.
Note that $A_a(\lambda) \in \mathscr{P}^{(n)}$ and
$L_a(\lambda) \in \mathscr{P}_{n+a}$
with $n = n_a(\lambda)$.

\begin{example}
Let $\lambda = (19,16,9,2,1)$.
Then we have 
$L_0(\lambda)=L_1(\lambda)=(2,1)$.
See Figs \ref{fig1} (a) and \ref{fig2} (a) for the relevant decomposition of the diagram into
the sub-diagrams in \eqref{eq:2024dec6_1}.
\end{example}

By comparing the procedure to define sqrank/rerank and
the procedure to define $\Psi^{(a)}$ in the proof of Proposition \ref{prop:12},
we see that $R_a(\lambda) = \Psi^{(a)}_1(A_a(\lambda)) \in \mathscr{P}^{(n)}_{n+a}$ with $n = n_a(\lambda)$.
Thus in terms of the map $\Psi^{(a)}_1$ the definition of sqrank/rerank takes the form
\begin{align}
{\rm sqrank} (\lambda) &= \mathcal{E}_1 ( \Psi^{(0)}_1(A_0(\lambda)); n_0(\lambda)), \\
{\rm rerank} (\lambda) &= \mathcal{E}_1 ( \Psi^{(1)}_1(A_1(\lambda)); n_1(\lambda)), 
\end{align}
where $\mathcal{E}_1$ is the function defined in \eqref{eq:2024nov27_6}.
Now we define
\begin{align}
\mathscr{S}^{[n;r]} &= \{ \lambda \in \mathscr{P} \mid {\rm sqrank} (\lambda) =r, 
n_0(\lambda) =n \}, \\
\mathscr{R}^{[n;r]} &= \{ \lambda \in \mathscr{P} \mid {\rm rerank} (\lambda) =r, 
n_1(\lambda) =n \}.
\end{align}

Recall the definition of
$\Phi^{(a)}: \mathscr{P}^{(n)} \times \mathscr{P}_{n+a} \rightarrow
\mathscr{P}^{(n)}_{n+a} \times \mathscr{P}_{2n+a}$ 
in the proof of Corollary \ref{coro:13} and the decomposition \eqref{eq:2024dec6_1}.
By using the relation
\begin{equation}
\Phi^{(a)} (A_a(\lambda), L_a(\lambda)) = (\Psi^{(a)}_1(A_a(\lambda)),\Psi_2^{(a)} (A_a(\lambda)) \cup  L_a(\lambda)),
\end{equation}
and noting that the map $\Phi^{(a)}$ is an energy preserving bijection,
we see that
\begin{align}
\sum_{\lambda \in \mathscr{S}^{[n;r]} }q^{|\lambda|} &= 
q^{n^2} \cdot 
\sum_{\pi \in \mathscr{P}^{(n;r)}_{n} }q^{|\pi|} \cdot
\sum_{\mu \in \mathscr{P}_{2n} }q^{|\mu|}, \label{eq:2024dec6_3}\\
\sum_{\lambda \in \mathscr{R}^{[n;r]} }q^{|\lambda|} &= 
q^{n^2+n} \cdot 
\sum_{\pi \in \mathscr{P}^{(n;r)}_{n+1} }q^{|\pi|} \cdot
\sum_{\mu \in \mathscr{P}_{2n+1} }q^{|\mu|},\label{eq:2024dec6_4}
\end{align}
where $\mathscr{P}^{(n;r)}_{n+a}$ is the subset of $\mathscr{P}^{(n)}_{n+a}$ given by
\eqref{eq:2024dec6_2}.
Now we define
\begin{align}
{\rm ps}^{(r)} (n) &= \# \{ \lambda \vdash n \mid {\rm sqrank} (\lambda) =r \}, \\
{\rm pr}^{(r)} (n) &= \# \{ \lambda \vdash n  \mid {\rm rerank} (\lambda) =r \}.
\end{align}
Then by using \eqref{eq:2024dec6_5}, \eqref{eq:2024dec6_3}, and \eqref{eq:2024dec6_4},
we have 
\begin{align*}
\sum_{n \geq 0} {\rm ps}^{(r)} (n) q^n
&=  \sum_{n \geq 0} \frac{q^{n^2} Z_{2n,n}^{(r)}(q)}{(q;q)_{2n}},  \\
\sum_{n \geq 0} {\rm pr}^{(r)} (n) q^n 
&=  \sum_{n \geq 0} \frac{q^{n^2+n} Z_{2n+1,n}^{(r)}(q)}{(q;q)_{2n+1}}.
\end{align*}
Comparing this result with
Proposition \ref{prop:2024oct16_2}, we see that $p^{(r)}_{2,1}(n)={\rm ps}^{(r)} (n)$ 
and $p^{(r)}_{2,2}(n)={\rm pr}^{(r)} (n)$.
This completes the proof of Theorem \ref{th:main}.

\section{Concluding remarks}\label{sec8}
Inspired by the study of the minimal excludant in integer partitions by
Andrews and Newman \cite{AN2020}, and in particular by the
generating function \eqref{eq:2024dec25_1} of
the mex-related function $p_{2,2}(n)$, we introduced a pair of partition statistics, sqrank and rerank.
We have shown that there is a nontrivial connection between these statistics and
the odd/even mex functions for having equinumerous integer partitions.
To establish our main result, Theorem \ref{th:main}, we used an analytic method for
showing that the different partition statistics
share the same generating functions.
We hope that there is a more explicit bijective proof of our result.  

An obvious generalization of our answer (Corollary \ref{coro:2}) 
to one of their questions \cite[p.~9, Question II]{AN2020}
may be stated as follows: {\em $p_{2,1}(n)$ equals the number of partitions of $n$ with even sqrank}.
However, as was shown in their paper \cite[Theorem 4]{AN2020}, 
there is a simpler partition statistic which produces the same integer sequence $\{p_{2,1}(n) \}_{n \geq 0}$.
In fact, there is the identity $p_{2,1}(n) = p_e(n)$ where $p_e(n)$ is the number of partitions of $n$
into an even numbers of parts.
The author does not know whether a similar result exists for $p_{2,2}(n)$.

The author believes that most of the readers will agree to saying that 
the definition of our new partition statistics in Sect.~\ref{sec2} is fairly succinct.
However, while the definition of rank and crank was stated in three lines
without using mathematical notation \cite{AN2020}, 
our definition of sqrank and rerank requires ten lines. 
While the former only uses parts and number of parts of the partitions,
the latter uses Ferrers diagrams and combinatorial procedures thereon. 
This may be rather unsatisfactory.
We hope that someone will be able to find another partition statistic with more succinct
definition but can also reproduce the same integer sequence of $p_{2,2}(n)$.

\backmatter


\section*{Declarations}

\begin{itemize}
\item Conflict of interest: The author confirms that he has no conflict of interest
in connection with this paper.
\item Data availability: No data were generated or used in the preparation of this
paper.
\end{itemize}


\begin{appendices}

\section{Relation to the Kostka \red{polynomials}}\label{secA}
The following polynomial considered in Sect. \ref{sec4_2}
\begin{equation}\label{eq:2025jun16_3}
Z_{L,s}^{(0)}(q) :=
\sum_{\eta \in \mathcal{H}^{(0)} (L,s)} q^{E(\eta)} ={ L \brack s}- { L \brack s-1},
\end{equation}
is equal to the Kostka \red{polynomial} $K_{\lambda, \mu}(q)$ with
$\lambda =2^s 1^{L-2s}$ and $\mu =1^L$.
We present two explanations for this fact.

The first is to connect its right hand side with this $K_{\lambda, \mu}(q)$,
pointed out by Warnaar \cite{Warnaar2004}.
As was suggested there, it can be proved by using a formula
derived by Macdonald \cite[III. \S 6. Exercise 2]{Macdonald}
\begin{equation*}
K_{\lambda, (1^L)}(q) = q^{n(\lambda')} (q;q)_L H_{\lambda} (q)^{-1}, \quad
n(\lambda') = \sum_{i \geq 1} {\lambda_i \choose 2},
\end{equation*}
for any partition $\lambda$ of $L$, where $H_{\lambda} (q)$ is the hook-length polynomial
associated with $\lambda$ defined by
\begin{equation*}
H_{\lambda} (q) = \prod_{x \in \lambda} (1-q^{h(x)}).
\end{equation*}
Here, $h(x)$ is the length of the hook whose body is on cell $x$ of the partition 
$\lambda$ (viewed as a Ferrers diagram).
For $\lambda =2^s 1^{L-2s}$, we have $n(\lambda') =s$ and
\begin{equation*}
H_{\lambda} (q) =(q;q)_s (q;q)_{L-2s} (q^{L-2s+2};q)_s = \frac{(q;q)_s (q;q)_{L-s+1}}{1-q^{L-2s+1}}.
\end{equation*}
Therefore
\begin{equation*}
K_{(2^s 1^{L-2s}), (1^L)}(q) = \frac{(q;q)_L}{(q;q)_s (q;q)_{L-s+1}}(1-q^{L-s+1}-1+q^s)
={ L \brack s}- { L \brack s-1}.
\end{equation*}

The second is to connect its left hand side with the following
Lascoux-Sch\"utzenberger's formula
\begin{equation*}
K_{\lambda, (1^L)}(q) = \sum_{T} q^{c(T)}
\end{equation*}
summed over all standard tableaux $T$ of shape $\lambda$,
with an interpretation of the charge $c(T)$ by Nakayashiki and Yamada \cite{NY97}.
For $\lambda =2^s 1^{L-2s}$, there is a bijection between $\mathcal{H}^{(0)} (L,s)$ and
the standard tableaux of this shape.
That is, for every bit sequence
$\eta =(\eta_1,\dots, \eta_L) \in \mathcal{H}^{(0)} (L,s)$ we send it to a 
unique standard tableau $T(\eta)$ of shape $2^s 1^{L-2s}$ in which
letter $j$ appears in the first (resp. the second) column of $\lambda$ if
$\eta_j =0$ (resp. $\eta_j =1$).
From their main theorem \cite[Theorem 4.2]{NY97}, we have an expression for the index ${\rm ind}(i)$
of letter $i$ in the 
standard tableau $T(\eta)$ as a sum of  ``winding numbers"  \cite[Example 2.15]{NY97} as
\begin{equation*}
{\rm ind}(i) = \sum_{j=1}^{i-1} H(\eta_j,\eta_{j+1}),
\end{equation*}
where $H: \{0,1\}^2 \rightarrow \{0,1\}$ is the function defined
above equation \eqref{eq:2024july3_9}. 
Then the charge $c(T(\eta))$ is given by
\begin{equation*}
c(T(\eta)) = \sum_{i=1}^L {\rm ind}(i)  = \sum_{1 \leq j \leq L-1} (L-j) H(\eta_j,\eta_{j+1}).
\end{equation*}
Now we consider a map $\sigma$ from $\mathcal{H}^{(0)} (L,s)$ to itself as follows.
For every $\eta \in \mathcal{H}^{(0)} (L,s)$, let $\sigma (\eta)$ be the bit sequence
obtained from $\eta$ by first reversing its order, and then applying on it the time evolution
of box-ball system (See, for example \cite{CKST2023, Takagi1}) once:
\begin{align*}
&0000111 \xrightarrow{\text{reverse}} 1110000 \xrightarrow{\text{time evolve}} 0001110=\sigma(0000111) ,\\
&0100110 \xrightarrow{\text{reverse}} 0110010 \xrightarrow{\text{time evolve}} 0001101=\sigma(0100110),\\
&0010110 \xrightarrow{\text{reverse}} 0110100 \xrightarrow{\text{time evolve}} 0001011=\sigma(0010110).
\end{align*}
It is easy to see that $\sigma$ is an involution on the whole $\mathcal{H}^{(0)} (L,s)$, and 
$c(T(\eta)) = E(\sigma (\eta))$.
Therefore
\begin{equation*}
K_{(2^s 1^{L-2s}), (1^L)}(q) = \sum_{\eta \in \mathcal{H}^{(0)} (L,s)} q^{c(T(\eta))} 
=\sum_{\eta \in \mathcal{H}^{(0)} (L,s)} q^{E(\sigma (\eta))} 
=\sum_{\eta \in \mathcal{H}^{(0)} (L,s)} q^{E(\eta)}. 
\end{equation*}

\section{Relation to the affine Lie algebra character formulae}\label{secB}
The second description of the Kostka polynomials in Appendix \ref{secA} enables us to define
a generalization of this polynomial to its level-restricted analogue.
By classifying the elements of $\mathcal{H}^{(0)} (L,s)$ by the maxima of their path encodings, 
we have
\begin{align*}
\mathcal{H}^{(0)} (L,s) &= \coprod_{l= 1}^{L-s} \mathcal{H}^{(0,l)} (L,s),\\
\mathcal{H}^{(0,l)} (L,s) &:= \{ \eta \in \mathcal{H}^{(0)} (L,s) \mid
\max_{1 \leq i \leq L} S_i(\eta) = l \}. 
\end{align*}
Then the {\em level restricted} Kostka polynomials can be defined as
\begin{equation*}
K_{(2^s 1^{L-2s}), (1^L)}^{(l)}(q) = \sum_{\eta \in \mathcal{H}^{(0,l)} (L,s)} q^{c(T(\eta))}
=\sum_{\eta \in \mathcal{H}^{(0,l)} (L,s)} q^{E(\sigma(\eta))}. 
\end{equation*}
Note that the map $\sigma$ defined in Appendix \ref{secA} is not an involution on each $\mathcal{H}^{(0,l)} (L,s)$.
Now we see that the identities in Proposition \ref{prop:2024oct16_2}
can be written as
\begin{align}
\sum_{n \geq 0} p^{(r)}_{2,1}(n) q^n
&=  \sum_{n \geq 0} \frac{K_{(2^{n-r} 1^{2r}), (1^{2n})}(q) K_{(2^n), (1^{2n})}^{(1)}(q)}{(q;q)_{2n}}, \label{eq:2025jun20_1} \\
\sum_{n \geq 0} p^{(r)}_{2,2}(n) q^n 
&=  \sum_{n \geq 0} \frac{K_{(2^{n-r} 1^{2r+1}), (1^{2n+1})}(q) K_{(2^n1), (1^{2n+1})}^{(1)}(q)}{(q;q)_{2n+1}}.\label{eq:2025jun20_2}
\end{align}
This can be verified by noticing that both
$\mathcal{H}^{(0,1)} (2n,n) = \{ 01 \dots 01 \}$ and
$\mathcal{H}^{(0,1)} (2n+1,n) = \{ 01 \dots 010 \}$
have only one element, and hence the relevant level restricted Kostka polynomials are actually
monomials with the powers
$E(\sigma(01 \dots 01)) = E(01 \dots 01) = 1+3+\dots + (2n-1) = n^2$ and 
$E(\sigma(010 \dots 010)) = E(00101 \dots 01) = 2+4+\dots + 2n = n^2+n$,
respectively.

The right hand sides of the above identities are special cases of
the more general spinon character formulae proposed by Hatayama et.~al. \cite[(5.49)]{HKKOTY},
for the branching coefficients for level 1 highest weight modules of
affine Lie algebra \red{$\widehat{\mathfrak{sl}}_2$} with respect to its irreducible decomposition into
finite dimensional modules of its subalgebra \red{$\mathfrak{sl}_2$}.
Following their notation \cite{HKKOTY}, 
the right hand side of \eqref{eq:2025jun20_1} is equal to
$b^{V(\Lambda_0)}_{2r \overline{\Lambda}_1}(q)$, and that of
\eqref{eq:2025jun20_2} is
$b^{V(\Lambda_1)}_{(2r+1) \overline{\Lambda}_1} (q)$.
Here, the superscript $V(\Lambda_i) \, (i=0,1)$ indicates that it is for
the irreducible highest weight module of the \red{$\widehat{\mathfrak{sl}}_2$} with highest weight $\Lambda_i$, and
the subscript $m \overline{\Lambda}_1 \, (m=2r, 2r+1)$ indicates
that it is for the $m+1$ dimensional irreducible module of the \red{$\mathfrak{sl}_2$}.

This provides us another interpretation of the mex-related functions.
In each degree $n$ measuring the depth along the direction of ``null root"
in the weight space of the \red{$\widehat{\mathfrak{sl}}_2$},
the number of $2r+1$ dimensional \red{$\mathfrak{sl}_2$} modules in the irreducible decomposition of
$V(\Lambda_0)$ is
equal to $p^{(r)}_{2,1}(n)$, and that of $2r+2$ dimensional \red{$\mathfrak{sl}_2$} modules in the irreducible decomposition of $V(\Lambda_1)$ is equal to $p^{(r)}_{2,2}(n)$.

A direct connection of the left hand sides of \eqref{eq:2025jun20_1} and
\eqref{eq:2025jun20_2} with the above branching coefficients can be seen as follows.
We follow the notation by Warnaar \cite{Warnaar2001}.
He considered more general highest weights $(p'/p - \ell - 2) \Lambda_0 + \ell \Lambda_1$
and denoted relevant characters by $\chi_\ell^{(p,p')}(z,q)$.
Here we restrict ourselves to only level 1 cases ($p=1, p'=3$).
Then the character formula can be written as
\begin{equation*}
\chi_\ell^{(1,3)}(z,q) = q^{\frac18 -\frac{(\ell + 1)^2}{12}} \sum_{m \in \mathbb{Z}}
C^{(1,3)}_{m,\ell}(q) z^{-\frac12 m},
\end{equation*}
with the string function $C^{(1,3)}_{m,\ell}(q)$ given by
\begin{equation*}
C^{(1,3)}_{m,\ell}(q) = \frac{q^{\frac14 (m^2 - \ell^2)}}{(q;q)_\infty},
\end{equation*}
if $m+\ell \equiv 0 \pmod{2}$, and $0$ otherwise.
It is easy to see that
\begin{align*}
\chi_0^{(1,3)}(z,q) &= q^{\frac{1}{24}} \sum_{r=0}^\infty (C^{(1,3)}_{2r,0}(q) - C^{(1,3)}_{2r+2,0}(q))
{\rm ch}_{r} (z),\\
\chi_1^{(1,3)}(z,q) &= q^{-\frac{5}{24}} \sum_{r=0}^\infty (C^{(1,3)}_{2r+1,1}(q) - C^{(1,3)}_{2r+3,1}(q))
{\rm ch}_{r+\frac12} (z),
\end{align*}
where ${\rm ch}_{j} (z) :=(z^{j+1/2} - z^{-j-1/2})/(z^{1/2} - z^{-1/2})$
for $j \in \frac12 \mathbb{Z}_{\geq 0}$ is the character of the $2j+1$ dimensional irreducible 
\red{$\mathfrak{sl}_2$} module.
Therefore we have
\begin{align*}
b^{V(\Lambda_0)}_{2r \overline{\Lambda}_1}(q) &= C^{(1,3)}_{2r,0}(q) - C^{(1,3)}_{2r+2,0}(q) =
\frac{q^{r^2}(1 - q^{2r+1})}{(q;q)_{\infty}},\\
b^{V(\Lambda_1)}_{(2r+1) \overline{\Lambda}_1} (q) &=
C^{(1,3)}_{2r+1,1}(q) - C^{(1,3)}_{2r+3,1}(q)= 
\frac{q^{r(r+1)}(1 - q^{2(r+1)})}{(q;q)_{\infty}}.
\end{align*}
As we explained in the proof of Proposition \ref{prop:2024oct16_2}, the right hand sides
of these equations are the generating functions of $p^{(r)}_{2,1}(n)$ and $p^{(r)}_{2,2}(n)$,
respectively.

\section{Relation to the configuration sums by Andrews-Baxter-Forrester}\label{secC}
Andrews, Baxter and Forrester \cite[Lemma 2.6.1]{ABF}  obtained the following
expression for a one-dimensional configuration sum
(in the case of ``regime II" in their terminology)
\begin{align*}
x_m(a,b,c) &= q^{(m+a-b)/4} \left\{
f_m (a,b,c) - f_m(-a,b,c) \right\}, \\
f_m(a,b,c)&=\sum_{\lambda = - \infty}^\infty 
q^{r \lambda^2 - a \lambda + (b+1-c)(2r\lambda + b-a)/4}
{ m \brack \frac12 (m+a-b) - r \lambda }.
\end{align*}
Here $r$ is a fixed integer $(\geq 3)$ and $m,a,b,c$ are integers satisfying 
$m \geq 0, 1 \leq a,b,c \leq r, c=b\pm 1$ and $m+a-b \equiv 0 \pmod{2}$.
By taking the limit $r \rightarrow \infty$, only the $\lambda =0$ term survives in the sum over $\lambda$
and one has
\begin{equation*}
x_m(a,b,c) = q^{(m+a-b)/4} \left\{
q^{\frac{(b+1-c)(b-a)}{4}} { m \brack \frac12 (m+a-b) } -
q^{\frac{(b+1-c)(b+a)}{4}} { m \brack \frac12 (m-a-b) }\right\}.
\end{equation*}
In particular for $m=L, a=1,b=L+1-2s$ and $c=b\pm1$, we have
\begin{align}
x_L(1,L+1-2s, L+2-2s) =
q^{\frac{s}{2}} \left\{
{ L \brack s } -
 { L \brack s-1 }\right\}, \label{eq:2025jun16_1}\\
x_L(1,L+1-2s, L-2s) =
q^{\frac{L-s}{2}} \left\{
{ L \brack s } -
q { L \brack s-1 }\right\}.\label{eq:2025jun16_2}
\end{align}

Let $H_{\rm ABF}$ denote the function from $ \{0,1\}^2$ to $\{0,1\}$ that is given by
$H_{\rm ABF}(0,1)=H_{\rm ABF}(1,0)=1/2, H_{\rm ABF}(0,0)=H_{\rm ABF}(1,1)=0$.
For bit sequences $\eta \in \mathcal{H}^{(0)} (L,s)$, we define their ABF-energy by
\begin{equation*}
E_{\rm ABF}(\eta) = \sum_{1 \leq j \leq L-1} j H_{\rm ABF}(\eta)(\eta_j, \eta_{j+1}).
\end{equation*}
Also we define its relevant configuration sums
\begin{align*}
Z_{L,s}^{\nearrow} (q)&= \sum_{\eta \in \mathcal{H}^{(0)} (L,s)} q^{E_{\rm ABF}(\eta) + L H_{\rm ABF}(\eta_L,0)} ,\\
Z_{L,s}^{\searrow} (q)&=\sum_{\eta \in \mathcal{H}^{(0)} (L,s)} q^{E_{\rm ABF}(\eta) + L H_{\rm ABF}(\eta_L,1)}.
\end{align*}
They satisfy the boundary conditions $Z_{L,0}^{\nearrow} (q) =1, Z_{L,0}^{\searrow} (q) = q^{L/2},
Z_{2s-1,s}^{\nearrow} (q) =0$ and the recursion relations \cite[(2.6.2) and (2.6.3)]{ABF}
\begin{align*}
Z_{L,s}^{\nearrow} (q)&= q^{L/2}Z_{L-1,s-1}^{\searrow} (q) + Z_{L-1,s}^{\nearrow} (q),\\
Z_{L,s}^{\searrow} (q)&= Z_{L-1,s-1}^{\searrow} (q) + q^{L/2}Z_{L-1,s}^{\nearrow} (q).
\end{align*}
By using \eqref{eq:2024july3_1} and \eqref{eq:2024july3_2}, we see that
the right hand sides of
\eqref{eq:2025jun16_1} and \eqref{eq:2025jun16_2} satisfy
the same boundary conditions and recursion relations.
This implies that
\begin{align}
Z_{L,s}^{\nearrow} (q)&= q^{\frac{s}{2}} \left\{
{ L \brack s } -
 { L \brack s-1 }\right\}, \nonumber\\
Z_{L,s}^{\searrow} (q)&= q^{\frac{L-s}{2}} \left\{
{ L \brack s } -
q { L \brack s-1 }\right\}.\label{eq:2025jun16_4}
\end{align}
The argument presented here is essentially equivalent to the one for deriving
their results \cite[Lemma 2.6.1]{ABF},
restricted to a limiting case and rewritten in our notation.

The $r=0$ case of our one dimensional configuration sum \eqref{eq:2024oct18_1}
defined by using the energy \eqref{eq:2024july3_9} takes the form
\eqref{eq:2025jun16_3}, which differs from the first equation of \eqref{eq:2025jun16_4} only by 
the factor $q^{\frac{s}{2}}$.
This factor can be obtained in the following way.
For $\eta =(\eta_1, \dots, \eta_L) \in \mathcal{H}^{(0)} (L,s)$, let
$\eta^{\nearrow} =(\eta_1, \dots, \eta_L, 0) \in \mathcal{H}^{(0)} (L+1,s)$.
Since $\eta_1 =0$, the path $S(\eta^{\nearrow})$ has the same numbers of
local maxima and local minima \footnote{See Fig.~\ref{fig:pathenc2} for an example.}.
Let $\alpha_1, \dots, \alpha_p$ and $\beta_1, \dots, \beta_p$ be the positions
at which the path $S(\eta^{\nearrow})$ takes its local maxima and local minima, respectively,
under the condition $\alpha_1 < \beta_1 < \dots < \alpha_p < \beta_p$.
Then we have
\begin{align*}
E_{\rm ABF}(\eta) + L H_{\rm ABF}(\eta_L,0) &= \frac12 (\alpha_1 + \beta_1+\dots+\alpha_p + \beta_p),\\
E(\eta) + L H(\eta_L,0) &=\alpha_1 +\dots+\alpha_p.
\end{align*} 
The latter holds because $H(\eta_L,0) =0$ for any $\eta_L$.
Therefore
\begin{equation*}
E_{\rm ABF}(\eta) + L H_{\rm ABF}(\eta_L,0) - (E(\eta) + L H(\eta_L,0))=
\frac12 \left( (\beta_1 - \alpha_1) + \dots + (\beta_p - \alpha_p) \right).
\end{equation*}
By noticing the definition of the path encoding \eqref{eq:2024nov27_4},
one sees that $ (\beta_1 - \alpha_1) + \dots + (\beta_p - \alpha_p)=s$, the numbers of
$1$'s in the bit sequence $\eta$. 
\end{appendices}

\end{document}